\begin{document}

\newtheorem{theorem}[subsection]{Theorem}
\newtheorem{proposition}[subsection]{Proposition}
\newtheorem{lemma}[subsection]{Lemma}
\newtheorem{corollary}[subsection]{Corollary}
\newtheorem{conjecture}[subsection]{Conjecture}
\newtheorem{prop}[subsection]{Proposition}
\numberwithin{equation}{section}
\newcommand{\mr}{\ensuremath{\mathbb R}}
\newcommand{\mc}{\ensuremath{\mathbb C}}
\newcommand{\dif}{\mathrm{d}}
\newcommand{\intz}{\mathbb{Z}}
\newcommand{\ratq}{\mathbb{Q}}
\newcommand{\natn}{\mathbb{N}}
\newcommand{\comc}{\mathbb{C}}
\newcommand{\rear}{\mathbb{R}}
\newcommand{\prip}{\mathbb{P}}
\newcommand{\uph}{\mathbb{H}}
\newcommand{\fief}{\mathbb{F}}
\newcommand{\majorarc}{\mathfrak{M}}
\newcommand{\minorarc}{\mathfrak{m}}
\newcommand{\sings}{\mathfrak{S}}
\newcommand{\fA}{\ensuremath{\mathfrak A}}
\newcommand{\mn}{\ensuremath{\mathbb N}}
\newcommand{\mq}{\ensuremath{\mathbb Q}}
\newcommand{\half}{\tfrac{1}{2}}
\newcommand{\f}{f\times \chi}
\newcommand{\summ}{\mathop{{\sum}^{\star}}}
\newcommand{\chiq}{\chi \bmod q}
\newcommand{\chidb}{\chi \bmod db}
\newcommand{\chid}{\chi \bmod d}
\newcommand{\sym}{\text{sym}^2}
\newcommand{\hhalf}{\tfrac{1}{2}}
\newcommand{\sumstar}{\sideset{}{^*}\sum}
\newcommand{\sumprime}{\sideset{}{'}\sum}
\newcommand{\sumprimeprime}{\sideset{}{''}\sum}
\newcommand{\sumflat}{\sideset{}{^\flat}\sum}
\newcommand{\shortmod}{\ensuremath{\negthickspace \negthickspace \negthickspace \pmod}}
\newcommand{\V}{V\left(\frac{nm}{q^2}\right)}
\newcommand{\sumi}{\mathop{{\sum}^{\dagger}}}
\newcommand{\mz}{\ensuremath{\mathbb Z}}
\newcommand{\leg}[2]{\left(\frac{#1}{#2}\right)}
\newcommand{\muK}{\mu_{\omega}}
\newcommand{\thalf}{\tfrac12}
\newcommand{\lp}{\left(}
\newcommand{\rp}{\right)}
\newcommand{\Lam}{\Lambda_{[i]}}
\newcommand{\lam}{\lambda}
\def\L{\fracwithdelims}
\def\om{\omega}
\def\pbar{\overline{\psi}}
\def\phis{\varphi^*}
\def\lam{\lambda}
\def\lbar{\overline{\lambda}}
\newcommand\Sum{\Cal S}
\def\Lam{\Lambda}
\newcommand{\sumtt}{\underset{(d,2)=1}{{\sum}^*}}
\newcommand{\sumt}{\underset{(d,2)=1}{\sum \nolimits^{*}} \widetilde w\left( \frac dX \right) }

\newcommand{\hf}{\tfrac{1}{2}}
\newcommand{\af}{\mathfrak{a}}
\newcommand{\Wf}{\mathcal{W}}

\theoremstyle{plain}
\newtheorem{conj}{Conjecture}
\newtheorem{remark}[subsection]{Remark}

\makeatletter
\def\widebreve{\mathpalette\wide@breve}
\def\wide@breve#1#2{\sbox\z@{$#1#2$}%
     \mathop{\vbox{\m@th\ialign{##\crcr
\kern0.08em\brevefill#1{0.8\wd\z@}\crcr\noalign{\nointerlineskip}%
                    $\hss#1#2\hss$\crcr}}}\limits}
\def\brevefill#1#2{$\m@th\sbox\tw@{$#1($}%
  \hss\resizebox{#2}{\wd\tw@}{\rotatebox[origin=c]{90}{\upshape(}}\hss$}
\makeatletter

\title[Moments of Dirichlet $L$-functions over Function Fields]{Moments of Dirichlet $L$-functions to a fixed modulus over function fields}

%%\date{\today}
\author[P. Gao]{Peng Gao}
\address{School of Mathematical Sciences, Beihang University, Beijing 100191, China}
\email{penggao@buaa.edu.cn}

\author[L. Zhao]{Liangyi Zhao}
\address{School of Mathematics and Statistics, University of New South Wales, Sydney NSW 2052, Australia}
\email{l.zhao@unsw.edu.au}

\begin{abstract}
 In this paper, we establish the expected order of magnitude of the $k$th-moment of central values of the family of Dirichlet $L$-functions to a fixed prime modulus over function fields for all real $k \geq 0$.
\end{abstract}

\maketitle

\noindent {\bf Mathematics Subject Classification (2010)}: 11M38, 11R59, 11T06   \newline

\noindent {\bf Keywords}: Dirichlet $L$-functions, function fields, lower bounds, moments, upper bounds

\section{Introduction}
\label{sec 1}

  Moments of families of $L$-functions have important arithmetic applications, such as the study of the non-vanishing property of $L$-functions at the central point. In the classical setting, the following $2k$-th moment of central values of the family of Dirichlet $L$-functions to a fixed modulus $q$ has been extensively studied,
\begin{align}
\label{moments}
 \summ_{\substack{ \chi \shortmod q }}|L(\tfrac{1}{2},\chi)|^{2k}.
\end{align}
  Here $k \geq 0$,  $\sum^{\star}$ denotes the sum over primitive Dirichlet characters modulo $q$ and we assume that $q \not \equiv 2 \pmod 4$ to ensure that primitive Dirichlet characters modulo $q$ exist. \newline

The cases $k=1$ and $k=2$ in \eqref{moments} satisfy asymptotic formulas, as evaluated by R. E. A. C. Paley \cite{Paley} and  D. R. Heath-Brown \cite{HB81}, respectively. The result in \cite{HB81} is valid for almost all $q$ and is extended by K. Soundararajan \cite{Sound2007} for all $q$. In \cite{Young2011}, M. P. Young further improved the result in \cite{Sound2007} with a power saving error term for $q$ primes.  Subsequent work in this direction can be found in \cite{BFKMM1, BFKMM, Wu2020}. \newline

   Conjectured formulas concerning \eqref{moments} are given in \cites{CFKRS, BK07, KS00, ConreyFarmer, Keating&Snaith2000} for all $k \geq 0$.  Sharp lower and upper bounds of the conjectured order of magnitude concerning these moments for various values of $k$ can be found in \cite{HB2010, R&Sound1, C&L, BPRZ}. We only point out here that a result of K. Soundararajan \cite{Sound2009} and its refinement by A. J. Harper  \cite{Harper} establish sharp upper bounds for all $k \geq 0$ under the assumption of the generalized Riemann hypothesis (GRH). A modification of a method of M. Radziwi{\l\l} and K. Soundararajan \cite{Radziwill&Sound1} can be applied to establish sharp lower bounds for all $k \geq 1$.  Using a lower bound principle developed by  W. Heap and K. Soundararajan \cite{H&Sound}, P. Gao \cite{Gao2021-4} obtained sharp lower bounds for all $k \geq 0$. \newline

   The aim of this paper is to study the function field analogue of the above family of $L$-functions.  To this end, we fix a finite field $\mathbb{F}_{q}$ of cardinality $q$ and we write $A=\mathbb{F}_{q}[T]$ for the polynomial ring over $\mathbb{F}_{q}$.  Throughout the paper, we reserve the symbol $P$ for a monic, irreducible polynomial in $A$ and we refer to $P$ as a prime in $A$.  We also use the convention that when considering a sum over some subset $S$ of $A$, the symbol $\sum_{f \in S}$ stands for a sum over monic $f \in S$, unless otherwise specified. For any $f \in A$, we write $d(f)$ for its degree and define the norm $|f|$ to be $|f|=q^{d(f)}$ for $f\neq 0$ and $|f|=0$ for $f=0$.  We fix a polynomial $Q \in A$ of degree larger than $1$.  Let $\chi$ be a Dirichlet character modulo $Q$ defined in Section~\ref{sec 2} and $L(s,\chi)$ the $L$-function associated to $\chi$. We are interested in the family of $L$-functions as $\chi$ varies over all primitive characters modulo $Q$. The $2k$-th moment of this family at the central point is conjectured by N. Tamam \cite{Tamam} to satisfy the asymptotic formula
\begin{align}
\label{momentsFF}
 \sumstar_{\substack{ \chi \shortmod Q }}|L(\tfrac{1}{2},\chi)|^{2k} \sim C_k \phis(Q)(\log_q Q)^{k^2},
\end{align}
  where $k \geq 0$,  $\sum^*$ denotes the sum over primitive Dirichlet characters modulo $Q$, $\phis(Q)$ denotes the number of primitive characters modulo $Q$, and $C_k$ is an explicit constant. \newline

  In \cite{Tamam}, Tamam proved that \eqref{momentsFF} is valid for $k=1,2$ by evaluating the second and fourth moments asymptotically for primes $Q$. The result for the fourth moment is extended by J. C. Andrade and M. Yiasemides \cite{AY} to hold for a general polynomial $Q$. In \cite{AY21}, Andrade and Yiasemides further studied mixed fourth moments of all derivatives of the $L$-functions under consideration at the central point.   The sixth power moment of Dirichlet $L$-functions over rational function fields was studied by G. Djankovi\'c and D. \DJ oki\'c \cite{djdo}. \newline

  It is our aim in this paper to establish the $2k$-th moment given in \eqref{momentsFF} to the desired order of magnitude. Our main result is as follows.
\begin{theorem}
\label{thmorderofmag}
   For prime $Q \in A$ such that $|Q|$ is large and any real number $k \geq 0$, we have
\begin{align}
\label{orderofmag}
   \sumstar_{\substack{ \chi \shortmod Q }}|L(\tfrac{1}{2},\chi)|^{2k} \asymp \phis(Q)(\log_q |Q|)^{k^2}.
\end{align}
\end{theorem}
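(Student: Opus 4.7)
The plan is to establish the upper and lower bounds in \eqref{orderofmag} separately. A crucial advantage of the function field setting is that the Riemann hypothesis for $L(s,\chi)$ is known unconditionally by Weil's theorem, so the conditional methods of Soundararajan and Harper in the number field case translate into unconditional statements here.

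For the upper bound, I would adapt the Soundararajan--Harper approach. The starting point is a pointwise inequality expressing $\log|L(\tfrac12,\chi)|$ in terms of a short Dirichlet polynomial supported on primes $P$ of $A$, of the form
$$\log|L(\tfrac12,\chi)| \leq \Re \sum_{d(P)\leq x}\frac{\chi(P)}{|P|^{1/2+1/(x\log q)}}\cdot\frac{x-d(P)}{x} + \frac{d(Q)}{x} + O(1),$$
valid thanks to the Riemann hypothesis for $L(s,\chi)$. Following Harper, one splits $[1,x]$ into sub-intervals $I_j=(\alpha_{j-1},\alpha_j]$ with rapidly growing endpoints and uses H\"older's inequality iteratively to reduce $\sumstar|L(\tfrac12,\chi)|^{2k}$ to products of suitable moments of the short prime polynomials $\sum_{d(P)\in I_j}\chi(P)/|P|^{1/2}$. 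Character orthogonality modulo $Q$ together with the prime polynomial theorem then produces an upper bound of the expected order $\phis(Q)(\log_q|Q|)^{k^2}$.

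For the lower bound, I would use the Heap--Soundararajan lower-bound principle, as extended to all real $k\geq 0$ by Gao \cite{Gao2021-4}. One constructs, for each complex weight $z$, short Dirichlet polynomials on intervals $I_j$ partitioning primes of degree up to $\log_q|Q|$,
$$\mathcal{N}_j(\chi,z)=\sum_{\substack{f\text{ monic},\ P\mid f\Rightarrow d(P)\in I_j\\ \Omega(f)\leq \ell_j}}\frac{z^{\Omega(f)}\chi(f)}{\sqrt{|f|}},$$
so that $\mathcal{N}(\chi,z)=\prod_j\mathcal{N}_j(\chi,z)$ plays the role of $L(\tfrac12,\chi)^z$. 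A H\"older-type inequality of the Heap--Soundararajan form gives, for $k\geq 1$,
$$\sumstar_{\chi \bmod Q}|L(\tfrac12,\chi)|^{2k}\geq\frac{\bigl|\sumstar L(\tfrac12,\chi)\mathcal{N}(\chi,k-1)\overline{\mathcal{N}(\chi,k)}\bigr|^{2k}}{\bigl(\sumstar|\mathcal{N}(\chi,k)|^{2k/(2k-1)}\bigr)^{2k-1}},$$
reducing the lower bound to evaluating a twisted first moment and an unweighted moment of $\mathcal{N}(\chi,\cdot)$. For $0\leq k<1$ one replaces this by a different H\"older configuration with exponents tuned to $k$, following Gao's argument.

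The main obstacle is the evaluation of the twisted first moment $\sumstar L(\tfrac12,\chi)\mathcal{N}(\chi,k-1)\overline{\mathcal{N}(\chi,k)}$ to sufficient accuracy. The approximate functional equation expresses $L(\tfrac12,\chi)$ as a sum of length $\asymp |Q|^{1/2}$, and after multiplying by $\mathcal{N}(\chi,k-1)\overline{\mathcal{N}(\chi,k)}$ the combined length approaches $|Q|$. Character orthogonality over $A$ then produces a diagonal main term, which by the prime polynomial theorem should contribute of order $\phis(Q)(\log_q|Q|)^{k^2}$, plus off-diagonal contributions that must be controlled by Weil-type bounds. A further subtle point is the choice of interval endpoints and truncation parameters $\ell_j$, which must simultaneously keep each $\mathcal{N}_j$ short enough that its second moment can be computed by orthogonality, long enough that $\mathcal{N}(\chi,z)$ is a good mean-square approximation to $L(\tfrac12,\chi)^z$, and structured so that the resulting multiple divisor-type sums assemble into the predicted order of magnitude $(\log_q|Q|)^{k^2}$.
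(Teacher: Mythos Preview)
Your proposal is correct and follows essentially the same route as the paper: Soundararajan--Harper for the upper bound (unconditional thanks to Weil) and the Heap--Soundararajan/Gao lower-bound principle via the mollifiers $\mathcal{N}(\chi,k-1)$, $\mathcal{N}(\chi,k)$, reducing to a twisted first moment, a twisted second moment (for $0<k<1$), and moments of $\mathcal{N}$. Two small corrections: in the function field setting $L(\tfrac12,\chi)$ is an \emph{exact} polynomial sum of length $|Q|$ rather than an approximate one of length $|Q|^{1/2}$, and the off-diagonal in the twisted moments is handled elementarily (the mollifier is short enough that congruences force equality) rather than via Weil-type bounds.
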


Theorem \ref{thmorderofmag} is proved by establishing sharp lower and upper bounds for the moments, i.e. the two propositions below.
\begin{proposition}
\label{thmlowerbound}
   For prime $Q \in A$ such that $|Q|$ is large and any real number $k \geq 0$, we have
\begin{align}
\label{lowerbound}
   \sumstar_{\substack{ \chi \shortmod Q }}|L(\tfrac{1}{2},\chi)|^{2k} \gg_k \phis(Q)(\log_q |Q|)^{k^2}.
\end{align}
\end{proposition}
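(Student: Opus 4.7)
The plan is to adapt the lower bound principle of Heap and Soundararajan \cite{H&Sound}, in the form refined by Gao \cite{Gao2021-4} to handle all real $k \ge 0$, to the function field setting. The engine is a Dirichlet polynomial $\mathcal{A}(\chi)$ that models $L(\tfrac{1}{2},\chi)^{k}$ and whose twisted moments against $L(\tfrac{1}{2},\chi)$ can be evaluated using the orthogonality of primitive characters modulo the prime $Q$. I would fix a small $\theta \in (0,1)$ together with a sequence of degrees $R_0 < R_1 < \cdots < R_J$ satisfying $R_J \le \theta\, d(Q)$ and $J \asymp \log d(Q)$; writing $I_j$ for the set of monic prime polynomials $P \in A$ with $R_{j-1} < d(P) \le R_j$, set
\begin{equation*}
  \mathcal{N}_j(\chi) = \sum_{P \in I_j} \frac{\chi(P)}{|P|^{1/2}}, \qquad
  \mathcal{M}_j(\chi,\alpha) = \sum_{\ell=0}^{K_j} \frac{(\alpha \mathcal{N}_j(\chi))^\ell}{\ell!},
\end{equation*}
where $K_j$ is a truncation parameter chosen so that $\mathcal{M}_j(\chi,\alpha)$ captures $\exp(\alpha \mathcal{N}_j(\chi))$ for typical $\chi$. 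Put $\mathcal{A}(\chi) = \prod_{j=1}^{J}\mathcal{M}_j(\chi,k)$, together with an auxiliary polynomial $\mathcal{B}(\chi)$ built from the $\mathcal{M}_j(\chi,\alpha)$ for suitable fractional powers $\alpha$ exactly as in \cite{Gao2021-4}, where the construction differs slightly in the regimes $k \ge 1$ and $0 \le k < 1$.

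The argument then rests on two mean-value computations. The first is a twisted first moment
\begin{equation*}
  S_1 \;=\; \sumstar_{\chi \bmod Q} L(\tfrac{1}{2},\chi)\, \overline{\mathcal{A}(\chi)}\,\mathcal{B}(\chi) \;\gg\; \phis(Q)(\log_q |Q|)^{k^2},
\end{equation*}
which I would evaluate by inserting an approximate functional equation for $L(\tfrac{1}{2},\chi)$ over $A$, expanding $\overline{\mathcal{A}}\mathcal{B}$ into a Dirichlet polynomial supported on products of primes drawn from the blocks $I_j$, and applying the orthogonality relation for primitive characters $\bmod\, Q$. The diagonal yields the main term, whose size is determined by the function-field Mertens estimate $\sum_{d(P) \le R} |P|^{-1} = R + O(1)$ together with the combinatorics of the truncated exponentials, and it reproduces the expected factor $(\log_q |Q|)^{k^2}$. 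The second input is a matching upper bound on an auxiliary moment of $\mathcal{A}$ and $\mathcal{B}$, proved by the same expansion but with substantially simpler diagonal arithmetic; its precise shape is dictated by the Hölder configuration in \cite{Gao2021-4}.

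Applying that Hölder inequality then yields \eqref{lowerbound} upon rearrangement. The main obstacle, in my view, is securing $S_1$ at the correct size: the total length of $\overline{\mathcal{A}}\mathcal{B}$ must be a sufficiently small power of $|Q|$ that the approximate functional equation truncates cleanly and that the off-diagonal contribution after orthogonality is negligible, yet the truncation parameters $K_j$ and block sizes $|I_j|$ must simultaneously be chosen so that $\mathcal{A}$ actually recovers the full conjectured size $(\log_q|Q|)^{k^2}$. Tuning these parameters and verifying that the resulting combinatorial main term reproduces the random-matrix-theory prediction is the most delicate part of the argument, although the primality of $Q$ should help considerably by keeping the orthogonality identity---and hence the diagonal computation---particularly clean.
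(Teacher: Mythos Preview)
Your proposal follows the same strategy as the paper: both adapt the Heap--Soundararajan lower bound principle in the form of \cite{Gao2021-4}, build mollifiers out of truncated exponentials of prime sums on nested intervals, and feed twisted moments into a H\"older inequality. The notational choices differ (your $\mathcal{A},\mathcal{B}$ correspond to the paper's $\mathcal{N}(\chi,k)$ and $\mathcal{N}(\chi,k-1)$), but the architecture is the same.

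There is, however, a genuine omission in your outline for the range $0<k<1$. You list exactly two mean-value inputs: the twisted first moment $S_1$ and an ``auxiliary moment of $\mathcal{A}$ and $\mathcal{B}$'' involving only the Dirichlet polynomials. That pair suffices for $k\ge 1$, but for $0<k<1$ the H\"older configuration (both in \cite{Gao2021-4} and in the paper) requires a third input, namely an upper bound of the correct order for the twisted \emph{second} moment
\[
\sumstar_{\chi \bmod Q}\bigl|L(\tfrac12,\chi)\bigr|^{2}\,\bigl|\mathcal{N}(\chi,k-1)\bigr|^{2}\ \ll\ \phis(Q)(\log_q|Q|)^{k^{2}}.
\]
This is Proposition~\ref{Prop5} in the paper, proved via the approximate expression \eqref{lsquareapprox} for $|L(\tfrac12,\chi)|^{2}$ together with orthogonality; it is not captured by either of your two stated computations and should be listed as a separate (and nontrivial) ingredient.

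A minor correction: your Mertens estimate is misstated. One has $\sum_{d(P)\le R}|P|^{-1}=\log R+O(1)$, not $R+O(1)$; see \eqref{lam2p}. This does not affect the shape of the argument but would throw off the main-term computation if used as written.
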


Our Proposition~\ref{thmlowerbound} improves upon \cite[Theorem 1.3]{Tamam}, where \eqref{lowerbound} is established for all natural numbers $k$.  Next, the following result gives the upper bound in \eqref{orderofmag}.
\begin{proposition}
\label{thmupperbound}
Using the same notations as in Proposition~\ref{thmlowerbound}, we have
\begin{align}
\label{upperbound}
   \sumstar_{\substack{ \chi \shortmod Q }}|L(\tfrac{1}{2},\chi)|^{2k} \ll_k \phis(Q)(\log_q |Q|)^{k^2}.
\end{align}
\end{proposition}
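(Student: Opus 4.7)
The plan is to adapt the method of Soundararajan \cite{Sound2009}, as refined by Harper \cite{Harper}, to the function field setting. The key advantage is that the Riemann hypothesis for Dirichlet $L$-functions over $\mathbb{F}_q[T]$ is a theorem of Weil, so what in the number-field case requires GRH will here be unconditional.

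The first step is to establish a pointwise upper bound of Soundararajan type for $\log|L(\hf,\chi)|$ in terms of a short sum over primes: for each primitive $\chi \shortmod Q$ and each real $x$ with $2 \leq x \leq d(Q)$,
$$\log|L(\hf,\chi)| \leq \mathrm{Re}\sum_{\substack{P \\ d(P) \leq x}} \frac{\chi(P)\, w_x(d(P))}{|P|^{\hf+1/d(Q)}} + O\!\left(\frac{d(Q)}{x}\right),$$
for a suitable smooth weight $w_x$ supported on $[1,x]$ satisfying $|w_x(n)| \ll 1/n$. This follows from the function-field explicit formula combined with the unconditional location of the nontrivial zeros of $L(s,\chi)$ on the critical line.

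In the second step, following Harper, I partition the primes by degree into disjoint blocks $\mathcal{I}_j = \{P : \alpha_{j-1} < d(P) \leq \alpha_j\}$ for $j=1,\ldots,J$, with $\alpha_j$ growing doubly exponentially, and set
$$\mathcal{N}_j(\chi) := \mathrm{Re}\sum_{P \in \mathcal{I}_j} \frac{\chi(P)\, w_x(d(P))}{|P|^{\hf+1/d(Q)}}.$$
With thresholds $V_j$ and truncation lengths $M_j$ proportional to $V_j$, the Soundararajan--Harper dichotomy bounds $\exp(2k\mathcal{N}_j(\chi))$ on the typical set $\{|\mathcal{N}_j(\chi)|\leq V_j\}$ by $\big(\sum_{\ell\leq M_j}(2k\mathcal{N}_j(\chi))^{\ell}/\ell!\big)^{2}$, and on the atypical set by a high-power factor $(|\mathcal{N}_j(\chi)|/V_j)^{2M_j}$ whose contribution is negligible after averaging. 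Combined with Step~1, this expresses $|L(\hf,\chi)|^{2k}$ in terms of short Dirichlet polynomials of total degree bounded by $\sum_j M_j \alpha_j$, which we arrange to be strictly less than $d(Q)$.

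The third step averages over $\chi$ using the orthogonality relation for primitive characters modulo a prime $Q$: for monic $f,g$ with $d(f),d(g) < d(Q)$ and both coprime to $Q$,
$$\sumstar_{\chi \shortmod Q} \chi(f)\overline{\chi(g)} = \phis(Q)\, \mathbbm{1}_{f=g} + O(1).$$
Applied to the truncated Taylor polynomials and combined with the function-field prime polynomial theorem in the form $\sum_{d(P)\leq x} 1/|P| = \log_q x + O(1)$, the averaged main term produces the Gaussian-moment prediction of order $\phis(Q)(\log_q |Q|)^{k^2}$, while the tail from atypical $\chi$ is controlled by high $L^{2m}$-moments of each $\mathcal{N}_j$, which again reduce to Dirichlet polynomial mean values by orthogonality. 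The main obstacle is Harper's delicate calibration of the parameters $\alpha_j$, $V_j$, $M_j$: they must be chosen so that atypical contributions at every scale are negligible, while keeping $\sum_j M_j \alpha_j < d(Q)$ in order to obtain the \emph{sharp} exponent $k^2$ rather than $k^2+\varepsilon$ as in Soundararajan's original argument. Once the function-field Mertens-type estimates are in place, the translation of the iterative partition from \cite{Harper} is expected to be essentially mechanical.
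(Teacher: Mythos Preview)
Your approach is the same as the paper's---both implement the Soundararajan--Harper method unconditionally in the function field setting---but several details in your outline are inaccurate and one step is genuinely missing.

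First, minor inaccuracies: the pointwise bound in Step~1 should carry the exponent $\tfrac12+1/\log x$ (with $x$ the truncation parameter), not $\tfrac12+1/d(Q)$; the correct weight is $\log(x/|P|)/\log x$, which is $\ll 1$, not $\ll 1/d(P)$; and Harper's block parameters $\alpha_j$ grow \emph{geometrically} (the paper uses $\alpha_i=20^{i-1}/(\log\log|Q|)^2$), not doubly exponentially---you may be conflating this with the Radziwi{\l\l}--Soundararajan/Heap--Soundararajan setup used for the lower bound. You also omit the prime-square terms $\chi(P^2)$ in Step~1; the paper disposes of them by noting that for $\chi^2\neq\chi_0$ (all but at most one primitive $\chi$) the $P^2$-sum is $O(1)$ via the Weil bound, and the single exceptional character is handled trivially.

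The one genuine gap is the base case of the iteration. For $\chi$ in the ``worst'' set $\mathcal{S}(0)$ (those for which already the first block sum $\mathcal{M}_{1,l}$ is large for some $l$), there is no smaller scale to fall back on, so the dichotomy in your Step~2 does not directly bound $|L(\tfrac12,\chi)|^{2k}$ by a short Dirichlet polynomial. The paper closes this by first proving a preliminary Soundararajan-type bound $\sumstar_\chi|L(\tfrac12,\chi)|^{4k}\ll|Q|(\log_q|Q|)^{4k^2+\varepsilon}$ (Proposition~\ref{prop: upperbound}), then combining it with the measure estimate $\mathrm{meas}(\mathcal{S}(0))\ll|Q|\exp(-(\log\log|Q|)^2/20)$ via Cauchy--Schwarz. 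You should make this bootstrapping step explicit, since without it the atypical contribution at the first scale is not controlled.
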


These propositions will be proved using different approaches.  For lower bound in Proposition~\ref{thmlowerbound}, we will apply the lower bounds principle of Heap-Soundararajan \cite{H&Sound}.  For the upper bounds, we will use the method of Soundararajan \cite{Sound2009} and its refinement by Harper \cite{Harper}.  Note that although this method requires GRH in general, our result is unconditional since GRH has been established in the function field setting.

\section{Preliminaries}
\label{sec 2}

\subsection{Backgrounds on function fields}
\label{sec 2.1}

In this section, we cite some basic facts concerning function fields, most of which can be found in \cite{Rosen02}. Recall that $A=\mathbb{F}_{q}[T]$.  Let $\mathcal{M}$ denote the set of monic polynomials in $A$, $\mathcal{M}_n$ the set of monic polynomials of degree $n$ in $A$ and $\mathcal{M}_{\leq n}$ the set of monic polynomials of degrees not exceeding $n$. Recall further that $P$ denotes a prime in $A$, i. e. $P$ stands for a monic and irreducible element of $A$. \newline

  We define the zeta function $\zeta_A(s)$ associated to $A$ for $\Re(s)>1$ by
\begin{equation*}
%%\label{2.1}
\zeta_A(s)=\sum_{\substack{f\in A}}\frac{1}{|f|^{s}}=\prod_{P}(1-|P|^{-s})^{-1},
\end{equation*}
  where we recall our convention that the sum over $f$ is restricted to monic $f \in A$.  Since there are $q^n$ monic polynomials of degree
$n$, it follows that
\begin{equation*}
%%\label{2.2}
\zeta_A(s)=\frac{1}{1-q^{1-s}}.
\end{equation*}
  The above expression then defines $\zeta_A(s)$ on the entire complex plane with a simple pole at $s = 1$. We often write
$\zeta_A(s)=\mathcal{Z}(u)$ via a change of variables $u=q^{-s}$, yielding
\begin{equation*}
\mathcal{Z}(u)=\prod_{P}(1-u^{d(P)})^{-1}=(1-qu)^{-1}.
\end{equation*}

  We define a Dirichlet character $\chi$ modulo $f \in A$ in a similar way as that of the analogous object of a number field.  More specifically, let $\chi$ be a homomorphism from $(A/fA)^*$ to $\mc$ and we enlarge its domain to $A/fA$ by defining $\chi(\overline{g})=0$ for any $(g, f) \neq1$, where $\overline{g}$ is the coset to which $g$ belongs in $A/fA$.  We further extend $\chi$ to be defined on $A$ by setting $\chi(g) = \chi(\overline{g})$ for all $g \in A$. Throughout the paper, we shall always regard $\chi$ as a function defined on $A$ instead of on $(A/fA)^*$. For any fixed modulus $f \in A$, $\chi_0$ stands for the principal character modulo $f$ so that $\chi_0(g)=1$ for any $(g, f)=1$. We say a character $\chi$ modulo $f$ is primitive if it cannot be factored through $(A/f'A)^*$ for any proper divisor $f'$ of $f$. In particular, for any prime $Q$, any character $\chi \neq \chi_0 \pmod Q$ is primitive and the total number $\phis(Q)$ of distinct such primitive characters equals $\varphi(Q)-1=|Q|-2$, writing $\varphi$ for the Euler totient function on $A$. \newline

   We define the $L$-function associated to $\chi$ for $\Re(s)>1$ to be
\begin{equation*}
%%\label{2.5}
L(s,\chi)=\sum_{\substack{f\in A}}\frac{\chi(f)}{|f|^{s}}=\prod_{P}(1-\chi(P)|P|^{-s})^{-1}.
\end{equation*}

  Similar to the function $\mathcal{Z}(u)$, we have via the change of variables $u=q^{-s}$,
$$ \mathcal{L}(u,\chi) = \sum_{f \in A} \chi(f) u^{d(f)} = \prod_P (1-\chi(P) u^{d(P)})^{-1}.$$

  We also define the von Mangoldt function as
$$ \Lambda(f) =
\begin{cases}
d(P) & \mbox{ if } f=c P^k, c \in \mathbb{F}_q^{\times}, \\
0 & \mbox{ otherwise.}
\end{cases}
$$

\subsection{Preliminary Lemmas}

In this section we include some useful results needed in our proof of Theorem \ref{thmorderofmag}.
We first present a result concerning primes.
\begin{lemma}
\label{RS}
  Denote $\pi(n)$ for the number of primes of degree $n$. We have
\begin{equation}
\label{ppt}
\pi(n) = \frac{q^n}{n}+O \Big(  \frac{q^{n/2}}{n} \Big).
\end{equation}
   For $x \geq 2$ and some constant $b$, we have
\begin{align}
\label{logp}
\sum_{|P| \le x} \frac {\log |P|}{|P|} =& \log x + O(1) \quad \mbox{and} \\
\label{lam2p}
\sum_{|P| \le x} \frac{1}{|P|} =& \log \log x + b+ O\left( \frac {1}{\log x} \right).
\end{align}
  Moreover, for any $\chi \neq \chi_0$ modulo $Q$ and any $z \geq 1$, we have
\begin{align}
\label{wsum}
\sum_{\substack{|P| \leq z }}(\log_q |P|)\chi(P) \ll z^{1/2}.
\end{align}
\end{lemma}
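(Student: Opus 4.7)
\medskip

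\noindent\textbf{Proof proposal.} The lemma bundles together four standard estimates, each of which follows from well-known function-field tools. I would treat them in the stated order.

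For the prime polynomial theorem \eqref{ppt}, the starting point is the identity
\[
q^n \;=\; \sum_{d \mid n} d\,\pi(d),
\]
which comes from counting monic polynomials of degree $n$ through their irreducible factors (equivalently, from $T^{q^n} - T = \prod_{d \mid n} \prod_{d(P)=d} P$). Möbius inversion gives
\[
\pi(n) \;=\; \frac{1}{n}\sum_{d \mid n} \mu(n/d)\, q^{d} \;=\; \frac{q^n}{n} + O\!\Big(\frac{q^{n/2}}{n}\Big),
\]
since the surviving divisors satisfy $d \le n/2$ and there are at most $n$ of them.

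The Mertens-type estimates \eqref{logp} and \eqref{lam2p} then follow by regrouping primes by degree. Writing $N=\lfloor \log_q x \rfloor$, \eqref{ppt} yields
\[
\sum_{|P|\le x} \frac{\log|P|}{|P|} \;=\; (\log q)\sum_{n\le N} \frac{n\,\pi(n)}{q^n} \;=\; (\log q)\sum_{n\le N}\bigl(1 + O(q^{-n/2})\bigr) \;=\; N\log q + O(1),
\]
which is $\log x + O(1)$. A similar computation gives
\[
\sum_{|P|\le x} \frac{1}{|P|} \;=\; \sum_{n\le N}\frac{1}{n} + O\!\Big(\sum_{n\le N}\frac{q^{-n/2}}{n}\Big) \;=\; \log N + \gamma + O\!\Big(\frac{1}{N}\Big),
\]
and the identity $\log\log x = \log N + \log\log q + O(1/N)$ lets us rewrite this in the form $\log\log x + b + O(1/\log x)$ with an appropriate constant $b$.

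The bound \eqref{wsum} is the place where something genuinely non-elementary is used, and it will be the main obstacle. For a primitive $\chi \bmod Q$ with $Q$ prime of degree $d(Q)$, the $L$-function $\mathcal{L}(u,\chi)$ is a polynomial in $u$ of degree $d(Q)-1$, and by Weil's theorem (the Riemann Hypothesis in the function field setting) all of its inverse roots $\alpha_j$ satisfy $|\alpha_j| = \sqrt{q}$. Taking the logarithmic derivative of $\mathcal{L}(u,\chi) = \prod_j (1-\alpha_j u)$ and comparing with the Euler product expansion yields
\[
\sum_{d(f)=n}\Lambda(f)\chi(f) \;=\; -\sum_{j} \alpha_j^{\,n}, \qquad \Bigl|\sum_{d(f)=n}\Lambda(f)\chi(f)\Bigr| \;\le\; (d(Q)-1)\, q^{n/2}.
\]
Separating the prime contribution from the prime-power contribution (which is bounded trivially by $\sum_{k\ge 2,\,k\mid n} q^{n/k} \ll q^{n/2}$) one obtains
\[
\Bigl|\,n\sum_{d(P)=n}\chi(P)\Bigr| \;\ll\; d(Q)\, q^{n/2}.
\]
Summing over $n \le \lfloor \log_q z\rfloor$ and absorbing the geometric series then gives \eqref{wsum} (with an implicit constant depending on the fixed modulus $Q$, which is harmless for the applications in Sections on the upper and lower bounds of Theorem~\ref{thmorderofmag}). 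The only genuinely deep input here is Weil's bound on $\mathcal{L}(u,\chi)$; everything else is bookkeeping.
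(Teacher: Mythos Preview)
Your proposal is correct and follows essentially the same route as the paper: the paper simply cites \eqref{ppt}--\eqref{lam2p} from another source and, for \eqref{wsum}, decomposes the sum by degree and invokes the Weil-type bound $\sum_{|P|=q^n}\chi(P)=O(q^{n/2}/n)$ from Rosen before summing the geometric series---exactly your strategy, just with the von Mangoldt step and prime-power separation made explicit. Your flagging of the implicit $d(Q)$-dependence in the constant is a point the paper glosses over; it is indeed harmless for the later applications.
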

\begin{proof}
  The formulas \eqref{ppt}--\eqref{lam2p} can be found in \cite[Lemma 2.2]{G&Zhao12}.  Hence it remains only to establish \eqref{wsum}. For this, we note that 
\begin{align}
\label{sumlambda}
 \sum_{\substack{|P| \leq z }}(\log_q |P|)\chi(P) = \sum_{ n \leq \log z/\log q}\sum_{|P| =  q^n} (\log_q |P|)\chi(P)=
  \sum_{ n \leq \log z/\log q}n \sum_{|P| =  q^{n}} \chi(P).
\end{align}
  We now combine \cite[Chap. 4, (4)]{Rosen02} and \cite[Chap. 4, (5)]{Rosen02} to see that 
\begin{align}
\label{sumlambda1}
 \sum_{|P| =  q^n} \chi(P) =O(\frac {q^{n/2}}{n}).
\end{align}

  It follows from \eqref{sumlambda} and \eqref{sumlambda1} that 
\begin{align}
\label{ppowerest}
 \sum_{\substack{|P| \leq z }}(\log_q |P|)\chi(P) \ll \sum_{ n \leq \log z/\log q}q^{n/2} \ll z^{1/2}.
\end{align}
  This establishes \eqref{wsum} and hence completes the proof.
\end{proof}

  We end this section by including the following expressions for $L(1/2, \chi)$ and $|L(1/2, \chi)|^2$.
\begin{lemma}
\label{PropDirpoly}
  Let $\chi$ be a primitive character of modulus $R$. We have
\begin{align}
\label{Lexp}
  L(\tfrac 12, \chi)=& \sum_{|f| < |R|}\frac {\chi(f)}{\sqrt{|f|}} \quad \mbox{and} \\
\label{lsquareapprox}
|L(\half, \chi)|^2 =& 2 \sum_{\substack{f, g \\ |fg|< |R|}} \frac{\chi(f) \overline{\chi}(g)}{\sqrt{|fg|}}+O(|R|^{-1/2+\varepsilon}).
\end{align}
\end{lemma}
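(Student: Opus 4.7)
The plan is to prove the two identities by exploiting the fact that $\mathcal{L}(u,\chi)$ is a polynomial for $\chi$ non-principal, combined with its functional equation. For the first identity, I would expand $\mathcal{L}(u,\chi) = \sum_{n \geq 0} u^n X_n(\chi)$ with $X_n(\chi) := \sum_{f \in \mathcal{M}_n}\chi(f)$. Since $\chi$ is primitive (in particular non-principal) modulo $R$, orthogonality of characters forces $X_n(\chi) = 0$ for all $n \geq d(R)$; thus $\mathcal{L}(u,\chi)$ is a polynomial of degree at most $d(R)-1$, and substituting $u = q^{-1/2}$ gives $L(\tfrac12,\chi) = \sum_{n=0}^{d(R)-1} q^{-n/2} X_n(\chi) = \sum_{|f|<|R|} \chi(f)/\sqrt{|f|}$.

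For the second identity, I would start from $|L(\tfrac12,\chi)|^2 = L(\tfrac12,\chi)\, L(\tfrac12,\overline{\chi})$ and expand each factor via the first identity, yielding a double sum over monic $f,g$ with $|f|,|g|<|R|$. I would then partition this double sum according to the value of $N := d(f)+d(g)$ and invoke the functional equation $\mathcal{L}(u,\chi) = \epsilon(\chi)(\sqrt{q}\,u)^{d(R)-1}\mathcal{L}(1/(qu),\overline{\chi})$, with $|\epsilon(\chi)|=1$. This functional equation is equivalent to the coefficient symmetry $X_n(\chi) = \epsilon(\chi)\,q^{\,n-(d(R)-1)/2}\,X_{d(R)-1-n}(\overline{\chi})$, and applying it to both factors in the bilinear form $X_m(\chi)X_{N-m}(\overline{\chi})$ shows that the layers indexed by $N$ and $2(d(R)-1)-N$ contribute equally to $|L(\tfrac12,\chi)|^2$. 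Folding the large-$N$ region onto the small-$N$ region thereby doubles the small sum and produces the main term $2\sum_{|fg|<|R|} \chi(f)\overline{\chi}(g)/\sqrt{|fg|}$.

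The residual contribution comes from the middle layer $N = d(R)-1$, where the folding involution has its fixed axis, and this is what the error term $O(|R|^{-1/2+\varepsilon})$ must absorb. Bounding it in this sharp form relies on the square-root cancellation for character sums that the Riemann hypothesis over function fields makes unconditionally available, in the same spirit as the estimate \eqref{sumlambda1} deployed in Lemma~\ref{RS}. The main obstacle is arranging this bound tightly enough: crude triangle-inequality bounds on $X_n(\chi)$ (such as $|X_n(\chi)| \le \binom{d(R)-1}{n}q^{n/2}$, which follows from Weil's theorem via the factorization $\mathcal{L}(u,\chi) = \prod_j(1-\alpha_j u)$ with $|\alpha_j|=\sqrt{q}$) are too lossy when summed term-by-term, and a more delicate handling of the middle-layer bilinear form---exploiting cancellation between $\chi$ and $\overline{\chi}$ rather than treating each factor separately, possibly by smoothing the sharp cutoff $|fg|<|R|$ via a Mellin-type contour integral in the variable $u=q^{-s}$---will be required to reach the claimed power saving in $|R|$.
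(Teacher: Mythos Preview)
Your proof of \eqref{Lexp} is correct and complete.

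For \eqref{lsquareapprox}, the paper does not argue directly but simply cites Lemmas~3.10 and~3.11 of \cite{AY21}. Your folding strategy via the functional equation is the standard route and is structurally correct: setting $D=d(R)-1$ and $b_N=q^{-N/2}\sum_{m}X_m(\chi)\overline{X_{N-m}(\chi)}$, the symmetry $b_{2D-N}=b_N$ gives
\[
|L(\tfrac12,\chi)|^2=\sum_{N=0}^{2D}b_N=2\sum_{N=0}^{D}b_N-b_D=2\sum_{|fg|<|R|}\frac{\chi(f)\overline{\chi}(g)}{\sqrt{|fg|}}-b_D,
\]
so the entire error is the middle layer $b_D$, exactly as you say.

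The gap is that you stop here: you flag the bound on $b_D$ as ``the main obstacle'' and gesture at smoothing or contour methods without carrying anything out. There is in fact a clean way to control it that your sketch almost reaches. The functional equation gives $b_D=\overline{\epsilon(\chi)}\sum_{m=0}^{D}q^{-m}X_m(\chi)^2$, and hence by the triangle inequality and Parseval on the circle $|u|=q^{-1/2}$,
\[
|b_D|\le\sum_{m=0}^{D}q^{-m}|X_m(\chi)|^2=\frac{1}{2\pi}\int_0^{2\pi}\bigl|\mathcal{L}(q^{-1/2}e^{i\theta},\chi)\bigr|^2\,d\theta\ll_\varepsilon |R|^\varepsilon,
\]
the last step by the Lindel\"of bound on the critical line, which is unconditional here. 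This dispatches the middle layer without any termwise Weil bound, and note that smoothing the sharp cutoff would alter the statement rather than prove it. Obtaining the stronger $O(|R|^{-1/2+\varepsilon})$ recorded in the lemma requires the specific organisation of the approximate functional equation carried out in \cite{AY21}; as written, your proposal leaves that final saving unestablished.
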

\begin{proof}
  The expression in \eqref{Lexp} can be found on \cite[p. 189]{Tamam} and the expression in \eqref{lsquareapprox} follows by combining Lemmas 3.10 and 3.11 in \cite{AY21}.
\end{proof}

\subsection{Bounds for $L$-functions}

  In this section, we present several upper bounds concerning $L(s,\chi)$ for a primitive character $\chi$ modulo $Q$. We note from \cite[Proposition 4.3]{Rosen02} that when $\chi \neq \chi_0$, the function $L(s,\chi)$ is a polynomial in $q^{-s}$ of degree at most
$d(Q)-1$, where we recall that $d(Q)$ is the degree of the modulus $Q$ of $\chi$.  We then proceed as in the proof of \cite[Proposition 4.3]{BFK} by setting $m=d(Q)-1, z=0$ there and make use of the proof of \cite[Theorem 3.3]{AT14} to arrive at the following analogue of \cite[Proposition 4.3]{BFK}.
\begin{proposition}
\label{prop-ub}
 Let $\chi$ be a non-principal primitive character modulo $Q$ and $m=d(Q)-1$.  We have for $h \leq m$,
\begin{align}
\label{logLupperbound}
\log \big| L(\tfrac{1}{2}, \chi) \big| \leq \frac{m}{h} + \frac{1}{h} \Re \bigg(  \sum_{\substack{j \geq 1 \\ d(P^j) \leq h}} \frac{
\chi(P^j) \log q^{h - j \deg(P)}}{|P|^{j\big( 1/2+1/(h \log q) \big)} \log q^j} \bigg).
\end{align}
\end{proposition}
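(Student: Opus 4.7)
My plan is to follow the roadmap indicated in the discussion preceding the statement: combine the number-field argument of \cite[Proposition 4.3]{BFK} (specialized to $m = d(Q) - 1$ and $z = 0$) with the polynomial/explicit-formula machinery developed in \cite[Theorem 3.3]{AT14}. The underlying strategy is Soundararajan's: bound $\log|L(\tfrac12,\chi)|$ from above by a Fejér-smoothed Dirichlet polynomial of length $h$ in prime powers, with $h$ playing the role of $\log x$ and the term $m/h$ playing the role of $(\log q)/(\log x)$ in the classical setting. The Riemann Hypothesis is used crucially, and is available unconditionally by Weil in the function-field context.

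Concretely, I would proceed in three steps. First, using the factorization $\mathcal{L}(u,\chi) = \prod_{j=1}^{m}(1 - \alpha_j u)$ with $|\alpha_j| \leq \sqrt{q}$ and integrating $L'/L$ along a horizontal segment to the right of the critical line, one obtains
\begin{equation*}
\log|L(\tfrac12,\chi)| \leq \log|L(s_0,\chi)|, \qquad s_0 = \tfrac12 + \tfrac{1}{h\log q},
\end{equation*}
because each factor $|(s_0 - \rho_j)/(\tfrac12 - \rho_j)|$ in the Hadamard product is $\geq 1$ (if $L(\tfrac12,\chi) = 0$ the desired inequality is trivial). Second, the Euler product gives the convergent expansion
\begin{equation*}
\log \mathcal{L}(q^{-s_0},\chi) = \sum_{j \geq 1}\sum_{P} \frac{\chi(P^j)}{j\, |P|^{j s_0}},
\end{equation*}
into which I insert the Fejér cutoff $(h - d(P^j))^{+}/h$; the resulting truncated sum is precisely the Dirichlet polynomial appearing on the right-hand side of \eqref{logLupperbound}. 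Third, I estimate the truncation tail using the explicit formula $\sum_j \alpha_j^n = -\sum_{d(f) = n}\Lambda(f)\chi(f)$, which reduces the error to a sum over the $m$ roots $\alpha_j$ with $|\alpha_j| \leq \sqrt{q}$; each such root contributes at most $1/h$, producing the leading $m/h$.

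The main obstacle is this last step: the tail bound must be sharp with the precise constant $m/h$, rather than $O(m/h)$, for the inequality to take the stated form. This is the content of the Fejér-weighted contour calculation in the proof of \cite[Theorem 3.3]{AT14}, and I would invoke that calculation directly here. Once this is granted, collecting terms and taking real parts yields the stated bound, in complete parallel with the number-field proof in \cite[Proposition 4.3]{BFK} under the specialization $z = 0$ and $m = d(Q) - 1$.
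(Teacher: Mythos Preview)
Your high-level plan---invoke \cite[Proposition 4.3]{BFK} with $z=0$, $m=d(Q)-1$ together with the explicit-formula input from \cite[Theorem 3.3]{AT14}---is exactly what the paper does, and at that level of citation the proposal is fine.

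However, your attempt to flesh out the argument contains a genuine error in Step~1. The inequality
\[
\log\big|L(\tfrac12,\chi)\big| \leq \log\big|L(s_0,\chi)\big|, \qquad s_0=\tfrac12+\tfrac{1}{h\log q},
\]
is \emph{not} generally true, and the Hadamard-factor justification you give does not apply here. In the function-field setting the product is $\mathcal{L}(u,\chi)=\prod_{j=1}^{m}(1-\alpha_j u)$ with $u=q^{-s}$; the factors are $1-\alpha_j q^{-s}$, not $s-\rho_j$, so the monotonicity $|s_0-\rho_j|\geq|\tfrac12-\rho_j|$ is irrelevant. Concretely, writing $\alpha_j q^{-1/2}=e^{i\theta_j}$ under RH, one compares $|1-e^{i\theta_j}|$ with $|1-e^{-1/h}e^{i\theta_j}|$; for $\theta_j$ near $\pi$ the former is larger, so moving right from $\tfrac12$ to $s_0$ can \emph{decrease} $|L|$. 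Thus Step~1 as written fails, and Steps~2--3, which are predicated on first passing to $s_0$ and then truncating the Euler product there, do not assemble into a proof.

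The actual mechanism in \cite{AT14} (and in Soundararajan's original argument) is different: one bounds each term $\log|1-e^{i\theta_j}|$ \emph{pointwise from above} by a Fej\'er-truncated Fourier series of length $h$, using non-negativity of the Fej\'er kernel; the per-root truncation cost is $\leq 1/h$, producing $m/h$ in total, and the Fourier coefficients are converted to the prime-power sum via the identity $\sum_j \alpha_j^{\,n}=-\sum_{d(f)=n}\Lambda(f)\chi(f)$. The shift to abscissa $\tfrac12+\tfrac{1}{h\log q}$ in the final expression is an artefact of the Fej\'er weighting, not a preliminary reduction. If you rewrite your sketch along these lines you recover precisely the cited argument.
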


  Observe further that Lemma \ref{RS} implies that the terms on the right-hand side of \eqref{logLupperbound} corresponding to $P^j$ with $j \geq 3$
    contribute $O(1)$.  Also, by \eqref{wsum} and partial summation, we see that for any $z \geq 2$ and $\chi^2 \neq \chi_0$,
\begin{align*}
%%\label{primesquarebound}
 \sum_{\substack{ |P| \leq z^{1/2} }}  \frac{\chi(P^2)}{|P|^{1+2/\log z}}  \frac{\log (z/|P|^2)}{\log z} = O(1).
\end{align*}

     We apply the observations in \eqref{logLupperbound} by setting $|Q|=q^{d(Q)}, x=q^h$ there to arrive at the following upper bound, analogous to \cite[Proposition 1]{Harper}, for $\log |L(1/2, \chi)|$.
\begin{lemma}
\label{lem: logLbound1}
Let $|Q|$ be large and $2 \leq x \leq |Q|$. We have  for any non-principal primitive character $\chi$ modulo $Q$, 
\begin{align}
\label{equ:3.3'}
\begin{split}
 & \log  |L(\half, \chi)| \leq \Re \left( \sum_{\substack{  |P| \leq x }} \frac{\chi (P)}{|P|^{1/2+1/\log x}}
 \frac{\log (x/|P|)}{\log x} +
 \sum_{\substack{  |P| \leq x^{1/2} }} \frac{\chi (P^2)}{|P|^{1+2/\log x}}  \frac{\log (x/|P|^2)}{2\log x} \right)
 +\frac{\log |Q|}{\log x} + O(1).
\end{split}
 \end{align}
 Moreover, if $\chi^2 \neq \chi_0$, then we have
\begin{align}
\label{equ:3.3}
\begin{split}
 & \log  |L(\half, \chi)| \leq \Re \left( \sum_{\substack{  |P| \leq x }} \frac{\chi (P)}{|P|^{1/2+1/\log x}}  \frac{\log (x/|P|)}{\log x} \right)+ \frac{\log |Q|}{\log x} + O(1).
\end{split}
 \end{align}
\end{lemma}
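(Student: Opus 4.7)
The plan is to deduce both inequalities from Proposition~\ref{prop-ub} by substituting $x = q^h$ (equivalently $h = \log x/\log q$) and isolating the contributions from $j=1$, $j=2$, and $j \geq 3$ in the sum over prime powers on the right-hand side of \eqref{logLupperbound}. Under this substitution one has $|P|^{j(1/2+1/(h\log q))} = |P|^{j/2+j/\log x}$, while $\tfrac{1}{h}\cdot\tfrac{\log q^{h-jd(P)}}{\log q^j} = \tfrac{\log(x/|P|^j)}{j\log x}$, and $m/h \leq \log|Q|/\log x$. Thus \eqref{logLupperbound} becomes
\[
 \log|L(\half,\chi)| \leq \frac{\log|Q|}{\log x} + \Re\sum_{j \geq 1}\ \sum_{|P|^j \leq x}\frac{\chi(P^j)}{|P|^{j/2+j/\log x}}\cdot \frac{\log(x/|P|^j)}{j\log x}.
\]
The $j=1$ and $j=2$ terms already match the two explicit sums appearing on the right of \eqref{equ:3.3'}; hence \eqref{equ:3.3'} follows once one shows the $j \geq 3$ contribution is $O(1)$, and \eqref{equ:3.3} follows once one shows additionally that the $j=2$ sum is $O(1)$ whenever $\chi^2 \neq \chi_0$.

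For the $j \geq 3$ tail, the constraint $|P|^j \leq x$ implies $|P|^{j/\log x} \leq e$. Together with $|\chi(P^j)| \leq 1$ and $\log(x/|P|^j) \leq \log x$, each term is bounded by $1/(j|P|^{j/2})$. Summing over all $P$ and $j \geq 3$ and then appealing to the prime polynomial theorem \eqref{ppt} gives
\[
\sum_{j \geq 3}\sum_P \frac{1}{j|P|^{j/2}} \ll \sum_P \frac{1}{|P|^{3/2}} < \infty,
\]
so the tail contributes $O(1)$ and \eqref{equ:3.3'} is established.

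The main technical point is bounding the $j=2$ sum under the hypothesis $\chi^2 \neq \chi_0$. Writing $\chi(P^2) = \chi^2(P)$ and $S(y) = \sum_{|P| \leq y}(\log_q|P|)\chi^2(P)$, the estimate \eqref{wsum} applied to the non-principal character $\chi^2$ gives $S(y) \ll y^{1/2}$. Partial summation against the smooth weight $w(y) = y^{-1-2/\log x}\log(x/y^2)/(2\log x \cdot \log_q y)$ then expresses the $j=2$ sum as a boundary term at $y = x^{1/2}$, which vanishes because $\log(x/y^2)$ does, plus $-\int_2^{x^{1/2}}S(y)w'(y)\,dy$. One checks that $w'(y) \ll 1/(y^2\log y)$ throughout the relevant range, so that the integral is dominated by $\int_2^\infty y^{-3/2}\,dy = O(1)$. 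This calculation, routine but slightly delicate due to the $\log(x/y^2)$ factor and its derivative, is where most of the care is needed; feeding the resulting $O(1)$ bound back into the display from the first paragraph completes the proof of \eqref{equ:3.3}.
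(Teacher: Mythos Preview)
Your proof is correct and follows essentially the same approach as the paper: the paper likewise derives the lemma from Proposition~\ref{prop-ub} via the substitution $x=q^h$, uses Lemma~\ref{RS} to show the $j\ge 3$ contribution is $O(1)$, and invokes \eqref{wsum} with partial summation to dispose of the $j=2$ sum when $\chi^2\neq\chi_0$. Your write-up simply supplies more of the routine details (the explicit weight $w(y)$ and the bound $w'(y)\ll 1/(y^2\log y)$) that the paper leaves implicit.
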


In order to deal with the sums over primes in \eqref{equ:3.3'} or \eqref{equ:3.3}, we need the following mean value estimate which is similar to \cite[Lemma 3]{Sound2009}.
\begin{lemma}
\label{lem:2.5}
Let $m$ be a natural number such that $y^m  \leq |Q|$. For any complex numbers $a(P)$ we have
 \begin{align*}
   \sum_{\chi \shortmod Q}  \left|\sum_{\substack{|P| \leq y}}\frac{a(P)\chi(P)}{|P|^{1/2}}\right|^{2m}
  \ll_{\varepsilon} & |Q|m!\left (\sum_{\substack{|P| \leq y}}\frac{|a(P)|^2}{|P|}\right)^m.
 \end{align*}
\end{lemma}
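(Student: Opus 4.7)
The plan is to mimic Soundararajan's argument for the number-field analogue, exploiting the clean orthogonality of Dirichlet characters modulo $Q$ in the function field setting. I would first open the $2m$-th power as a $2m$-fold sum over primes:
\[
\left|\sum_{|P|\leq y}\frac{a(P)\chi(P)}{|P|^{1/2}}\right|^{2m}
= \sum_{\substack{P_1,\ldots,P_m \\ P_1',\ldots,P_m'}}\frac{\prod_i a(P_i)\overline{a(P_i')}}{\sqrt{|P_1\cdots P_m|\cdot|P_1'\cdots P_m'|}}\,\chi(P_1\cdots P_m)\overline{\chi}(P_1'\cdots P_m'),
\]
and then sum over all characters $\chi\shortmod Q$. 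Standard orthogonality yields $\sum_{\chi\shortmod Q}\chi(f)\overline{\chi}(g)=\varphi(Q)$ when $f\equiv g\pmod Q$ with $(fg,Q)=1$, and $0$ otherwise.

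The crucial step is to exploit the hypothesis $y^m\leq|Q|$. Setting $f=P_1\cdots P_m$ and $g=P_1'\cdots P_m'$, both are monic polynomials of degree at most $m\log_q y\leq d(Q)$. When $d(f)=d(g)=d(Q)$ the leading coefficients cancel in $f-g$, so in every case $d(f-g)<d(Q)$; combined with $Q\mid(f-g)$, this forces $f=g$ identically in $A$. Only the diagonal $f=g$ therefore contributes.

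On the diagonal, the multisets of the $P_i$ and of the $P_i'$ must coincide. I would parametrise each diagonal term by this common multiset, writing it as $\{p_1^{k_1},\ldots,p_r^{k_r}\}$ with $k_1+\cdots+k_r=m$. The number of orderings of an $m$-tuple realising a fixed multiset is the multinomial coefficient $\binom{m}{k_1,\ldots,k_r}$, and the same count applies independently to the primed tuple, so collecting contributions gives
\[
\sum_{\chi\shortmod Q}\left|\sum_{|P|\leq y}\frac{a(P)\chi(P)}{|P|^{1/2}}\right|^{2m}\leq\varphi(Q)\sum_{k_1+\cdots+k_r=m}\binom{m}{k_1,\ldots,k_r}^2\prod_{i=1}^r\frac{|a(p_i)|^{2k_i}}{|p_i|^{k_i}}.
\]
The identity $\binom{m}{k_1,\ldots,k_r}^2=m!\,\binom{m}{k_1,\ldots,k_r}/(k_1!\cdots k_r!)\leq m!\,\binom{m}{k_1,\ldots,k_r}$, combined with the multinomial expansion of $\bigl(\sum_{|P|\leq y}|a(P)|^2/|P|\bigr)^m$, converts the right-hand side into $\varphi(Q)\,m!\bigl(\sum_{|P|\leq y}|a(P)|^2/|P|\bigr)^m\leq|Q|\,m!\bigl(\sum_{|P|\leq y}|a(P)|^2/|P|\bigr)^m$, which is the desired estimate.

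I anticipate no substantive obstacle; the only subtlety is the passage from congruence to identity of polynomials, where the function field picture is actually cleaner than the classical integer setting, since a degree-plus-monicness argument disposes of the boundary case $d(f)=d(g)=d(Q)$ and allows the clean hypothesis $y^m\leq|Q|$, with non-strict inequality, to suffice.
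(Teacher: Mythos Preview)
Your argument is essentially the paper's own: expand the $2m$-th power, apply character orthogonality, use $y^m\le|Q|$ together with monicness to force the congruence $f\equiv g\pmod Q$ to collapse to $f=g$, and then bound the squared multinomial weights via $\binom{m}{k_1,\ldots,k_r}^{2}\le m!\binom{m}{k_1,\ldots,k_r}$ (the paper defers this last combinatorial step to \cite{Sound2009}, whereas you write it out). One small remark on your boundary discussion: the case $d(f)=d(Q)>d(g)$ is not covered by ``leading coefficients cancel'' and can in fact give $f=g+Q\ne g$ when $y^{m}=|Q|$ exactly, so the reduction to the diagonal is cleanest under the strict inequality $y^{m}<|Q|$ --- the paper glosses over the same point.
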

\begin{proof}
  Our proof follows closely the proof of \cite[Lemma 3]{Sound2009}. We expand the $m$-power and get
\begin{align*}
  \left|\sum_{\substack{|P| \leq y}}\frac{a(P)\chi(P)}{|P|^{1/2}}\right|^{2m}= \left| \sum_{|f| \leq y^m}\frac {a_{m,y}(f)\chi(f)}{\sqrt{|f|}} \right|^2,
\end{align*}
  where $a_{m,y}(f) = 0$ unless $f$ is the product of $m$ (not necessarily distinct) primes whose norms are all below $y$. In that case, if we write the prime factorization of $f$ as $f = \prod^r_{i=1}P_i^{\alpha_i}$, then $a_{m,y}(f) = \binom{m}{\alpha_1, \ldots, \alpha_r}\prod^r_{i=1}a(P_i)^{\alpha_i}$. \newline

  Now,
\begin{align}
\label{2kbound}
\sum_{\chi \shortmod Q} & \left|\sum_{\substack{|P| \leq y}}\frac{a(P)\chi(P)}{|P|^{1/2}}\right|^{2m}=\sum_{|f|, |g| \leq y^m}\frac {a_{m,y}(f)\overline{a_{m,y}(g)}}{\sqrt{|fg|}}\sum_{\chi \shortmod Q}\chi(f)\overline{\chi(g)}=\varphi(Q)\sum_{\substack{|f|, |g| \leq y^m \\ f \equiv g \shortmod Q}}\frac {a_{m,y}(f)\overline{a_{m,y}(g)}}{\sqrt{|fg|}},
\end{align}
  where the last expression above follows from the familiar orthogonality relation for characters, that is, for monic $u, \ v \in A$:
\begin{align}
\label{orthrel}
  \sum_{\substack{ \chi \shortmod Q }} \chi(u) \overline{\chi}(v)=\left\{
\begin{array}{ll}
  \varphi(Q), & u \equiv v \pmod Q, \\ \\
   0, & \text{otherwise}.
\end{array}
\right.
\end{align}

  As $y^m \leq  |Q|$, we see that the condition $f \equiv g \pmod Q$ in \eqref{2kbound} implies that $f=g$ since they are both monic. It follows that
\begin{align*}
 \sum_{\chi \pmod Q} & \left|\sum_{\substack{|P| \leq y}}\frac{a(P)\chi(P)}{|P|^{1/2}}\right|^{2m} = \varphi(Q)\sum_{\substack{|f| \leq y^m}}\frac {|a_{m,y}(f)|^2}{|f|}.
\end{align*}
  We further estimate right-hand side expression above following the treatments in \cite[Lemma 3]{Sound2009} to arrive at the desired result.
\end{proof}

In the course of proving Theorem \ref{thmorderofmag}, we need to first establish some weaker upper bounds for moments of the related families of $L$-functions in this section.  Let $\mathcal{N}(V, Q)$ be the number of primitive Dirichlet characters $\chi \bmod Q$ such that $\log|L(1/2, \chi)|\geq V$.  Our estimates require the following upper bounds for $\mathcal{N}(V, Q)$ that is similar to \cite[Theorem]{Sound2009}.
\begin{prop}
\label{propNbound}
 Let $|Q|$ be large. If $10\sqrt{\operatorname{log}\operatorname{log} |Q|}\leq V \leq \log \log |Q|$,  then
 \begin{align*}
  \mathcal{N}(V,Q)\ll \frac {|Q|V}{\sqrt{\log \log |Q|}}\operatorname{exp}\left(-\frac{V^2}{\log \log |Q|}
  \left(1-\frac{4}{\operatorname{log}\operatorname{log}\operatorname{log}|Q|}\right)\right).
 \end{align*}
  If $\log \log |Q|<V \leq \frac 14 \log \log |Q| \cdot \log \log \log |Q|$, we have
\begin{align*}
  \mathcal{N}(V,Q)\ll \frac {|Q|V}{\sqrt{\log \log |Q|}}\operatorname{exp}\left(-\frac{V^2}{\log \log |Q|}
  \left(1-\frac{7V}{2(\operatorname{log}\operatorname{log}|Q|)\operatorname{log}\operatorname{log}\operatorname{log}|Q|}\right)^2\right).
\end{align*}
   If $\frac 14 \log \log |Q| \cdot \log \log \log |Q|<V \leq 6 \log |Q|/\log \log |Q|$, we have
\begin{align*}
  \mathcal{N}(V,Q)\ll |Q|\operatorname{exp}\left(-\frac 1{64}V \log V\right).
\end{align*}
\end{prop}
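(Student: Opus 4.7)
The plan is to follow Soundararajan's method \cite{Sound2009} as refined by Harper \cite{Harper}, which combines the pointwise bound on $\log|L(\tfrac12,\chi)|$ from Lemma~\ref{lem: logLbound1} with the high-moment estimate of Lemma~\ref{lem:2.5}, and optimizes the free parameters $x$ and $m$ according to the size of $V$. For $2 \le x \le |Q|$, define
$$\mathcal{P}(\chi,x) := \Re \sum_{|P| \le x} \frac{\chi(P)}{|P|^{1/2+1/\log x}}\,\frac{\log(x/|P|)}{\log x}.$$
Inequality \eqref{equ:3.3} of Lemma~\ref{lem: logLbound1} gives, for every primitive $\chi$ with $\chi^2\ne\chi_0$, the pointwise bound $\log|L(\tfrac12,\chi)| \le \mathcal{P}(\chi,x) + \log|Q|/\log x + O(1)$. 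Since $Q$ is prime, at most two primitive characters modulo $Q$ satisfy $\chi^2=\chi_0$, so up to an additive $O(1)$ every character contributing to $\mathcal{N}(V,Q)$ satisfies $\mathcal{P}(\chi,x) \ge V_1 := V - \log|Q|/\log x - C$ for some absolute constant $C$.

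Markov's inequality with an even moment $2m$ chosen so that $x^m \le |Q|$, together with Lemma~\ref{lem:2.5} applied to the coefficients $a(P)=|P|^{-1/\log x}\log(x/|P|)/\log x$ (which are bounded by $1$) and the prime estimate \eqref{lam2p}, yields
$$\mathcal{N}(V,Q) \;\le\; V_1^{-2m}\sum_{\chi\shortmod Q}|\mathcal{P}(\chi,x)|^{2m} + O(1) \;\ll\; |Q|\,m!\left(\frac{\log\log x + O(1)}{V_1^{2}}\right)^{m},$$
which after Stirling becomes the master bound
$$\mathcal{N}(V,Q) \;\ll\; |Q|\sqrt{m}\left(\frac{m\,(\log\log x + O(1))}{e\,V_1^{2}}\right)^{m}.$$

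The three cases now follow by optimizing $x$ and $m$. In the first range $10\sqrt{\log\log|Q|}\le V \le \log\log|Q|$, take $m$ close to $V^2/\log\log|Q|$ and $\log x$ of order $\log|Q|\log\log\log|Q|/V$: the admissibility constraint $x^m\le|Q|$ is satisfied, the loss $\log|Q|/(V\log x)\ll 1/\log\log\log|Q|$ is tiny, and combined with $\log\log x = \log\log|Q|+O(\log\log\log|Q|)$ one recovers the $(1-4/\log\log\log|Q|)$ factor. In the second range the same $m$ still fits, but now the relative loss $\log|Q|/(V\log x)$ grows to order $V/(\log\log|Q|\log\log\log|Q|)$; expanding $V_1^{-2m}=V^{-2m}(1-O(\cdot))^{-2m}$ and moving the correction inside the exponential produces the squared factor $(1-\tfrac{7V}{2\log\log|Q|\log\log\log|Q|})^{2}$. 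The main obstacle is the third range, $V>\tfrac14\log\log|Q|\log\log\log|Q|$, where the admissibility constraint $x^m\le|Q|$ becomes binding well before the unconstrained optimum is reached; one must saturate $x^m\sim|Q|$ by taking $m$ slightly smaller than $V$ and $\log x\asymp\log|Q|/m$, so that $\log\log x=O(\log V)$ and the master bound collapses to $|Q|(\kappa\log V/V)^m$ for some constant $\kappa$. Tuning $m$ so that $V_1$ remains a fixed positive fraction of $V$ and tracking the constants through Stirling and \eqref{lam2p} then delivers the advertised $\exp(-V\log V/64)$, the loose factor $1/64$ providing enough slack to absorb all implicit constants.
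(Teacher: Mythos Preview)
Your proposal has a genuine gap: the single application of Markov's inequality to the full prime sum $\mathcal{P}(\chi,x)$ over $|P|\le x$ cannot deliver the claimed bounds, because the admissibility constraint $x^m\le |Q|$ in Lemma~\ref{lem:2.5} and your choices of $x$ and $m$ are incompatible. Concretely, in the first range you take $m\approx V^2/\log\log|Q|$ and $\log x\asymp (\log|Q|)\log\log\log|Q|/V$, which gives
\[
m\log x \;\asymp\; \frac{V\,(\log|Q|)\,\log\log\log|Q|}{\log\log|Q|}.
\]
Already at the bottom of the range $V=10\sqrt{\log\log|Q|}$ this is $\asymp (\log|Q|)\log\log\log|Q|/\sqrt{\log\log|Q|}$, and at $V=\log\log|Q|$ it is $\asymp (\log|Q|)\log\log\log|Q|$; in both cases $m\log x\gg\log|Q|$, so $x^m\le |Q|$ fails and Lemma~\ref{lem:2.5} does not apply. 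If instead you shrink $m$ to satisfy $x^m\le |Q|$ (forcing $m\ll V/\log\log\log|Q|$), the master bound degrades to something strictly weaker than $\exp(-V^2/\log\log|Q|)$, and for $V$ near $10\sqrt{\log\log|Q|}$ it is not even bounded.

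The paper avoids this by the standard Soundararajan splitting that you omitted: set $z=x^{1/\log\log|Q|}$ and write the prime sum as $M_1$ (over $|P|\le z$) plus $M_2$ (over $z<|P|\le x$). For $M_1$ the cutoff is $z$, so Lemma~\ref{lem:2.5} allows $m$ as large as $\log|Q|/\log z = V\log\log|Q|/A$, comfortably accommodating the optimal choice $m\approx V_1^2/\log\log|Q|$. For $M_2$ one takes the much smaller $m=[V/A]$ so that $x^m\le |Q|$ holds, and exploits that $\sum_{z<|P|\le x}|P|^{-1}\ll \log\log\log|Q|$ is bounded; this yields the secondary bound \eqref{equ:bd-S-2}. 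The final estimate for $\mathcal{N}(V,Q)$ is the sum of the two measures. Without introducing the intermediate truncation $z$ and treating $M_1$ and $M_2$ separately, the argument does not close.
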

\begin{proof}
   Our proof follows closely that of \cite[Theorem]{Sound2009}. Since there is at most one primitive character $\chi$ modulo $Q$ such that
$\chi^2 =\chi_0$, we may assume throughout the proof that $\chi^2 \neq \chi_0$. We now set $x=|Q|^{A/V}$ with
\begin{align*}
%%\label{equ:rough-01}
  A= \left\{
\begin{array}{ll}
 \frac{1}{2} \log\log\log |Q|, & 10\sqrt{\log \log |Q|} \leq V \leq \log \log |Q|, \\ \\
 \frac{1}{2V}  \log \log |Q| \cdot \log \log \log |Q|, & \log \log |Q| < V \leq \frac 14 \log \log |Q| \cdot \log \log \log |Q|,\\ \\
   2 , & V > \frac 14 \log \log |Q| \cdot \log \log \log |Q|.
\end{array}
\right.
\end{align*}

  We further set $z=x^{1/\log \log |Q|}$.   Write $M_1$ for the real part of the sum in \eqref{equ:3.3} truncated to $|P| \leq z$ and $M_2$ the complementary sum over $z < |P| \leq x$.  It then follows from \eqref{equ:3.3} that
\[
 \log  |L( \tfrac{1}{2}, \chi)|
    \leq M_1 + M_2 + \frac {V}{A} + O(1).
\]
Hence if  $\log  |L(1/2, \chi)| \geq V $, then we have either
\[ M_2 \geq \frac{V}{8A} \quad \mbox{or} \quad M_1 \geq V_1 := V \left(1-\frac{5}{4A} \right). \]

Now, we set
\[ \operatorname{meas}(Q;M_1) = \# \{\text{primitive $\chi$ modulo $Q$}: M_1 \geq V_1 \} \quad \mbox{and} \quad \operatorname{meas}(Q;M_2) = \# \left\{ \text{primitive $\chi$ modulo $Q$}: M_2 \geq \frac{V}{8A}  \right\}. \]

 Let $[x]$ denote the largest integer not exceeding $x$.  We take $m = [ V/A ] $ so that $x^m \leq |Q|$. We are then able to apply Lemma \ref{lem:2.5} with this $m$ to deduce, aided by Lemma \ref{RS}, that
\begin{align*}
%% \label{M1bound}
\begin{split}
\left( \frac{V}{8A} \right)^{2m} & \operatorname{meas}(X;M_2) \leq |Q| m! \Big( \sum_{z<|P| \leq x} \frac {1}{|P|} \Big)^{m} \ll |Q| \Big( m(\log \log |Q|+O(1)) \Big)^{m}.
\end{split}
\end{align*}

This leads to
\begin{equation}
\label{equ:bd-S-2}
\operatorname{meas}(X;M_2) \ll  |Q| \left( \frac {8A}{V} \right)^{2m}\Big( m(\log \log |Q|+O(1)) \Big)^{m} \ll |Q|\exp \left(-\frac {V}{2A}\log V \right).
\end{equation}

Next, we estimate $\operatorname{meas}(X;M_1)$.  We apply Lemma \ref{lem:2.5} again to get that for any $m \leq \log |Q|/\log z= V\log \log |Q|/A$,
\begin{align*}
%%\label{equ:3.4}
\begin{split}
V^{2m}_1\operatorname{meas}(X;M_1) \leq  &  |Q| m! \Big( \sum_{|P| <z} \frac {1}{|P|} \Big)^{m} \ll |Q| \sqrt{m} \Big( \frac {m \log \log |Q|}{e} \Big)^{m},
\end{split}
\end{align*}
 where the last estimate above follows from Lemma \ref{RS} and Stirling's formula (see \cite[(5.112)]{iwakow}), which asserts that
\begin{align}
\label{Stirling}
\begin{split}
 m! \ll \sqrt{m} ( \frac {m }{e} )^{m}.
\end{split}
\end{align}

  It follows that
\begin{align*}
%%\label{equ:3.4}
\begin{split}
 \operatorname{meas}(X;M_1) \ll |Q| \sqrt{m} \Big( \frac {m \log \log |Q|}{eV^{2}_1} \Big)^{m}.
\end{split}
\end{align*}

If $V \leq (\log \log |Q|)^2$, we take $m=[V^2_1/\log \log |Q|]$.  Otherwise if $V > (\log \log |Q|)^2$, we take $m=[10V]$.  These choices give raise to the bound
\begin{align}
\label{M1bound1}
\operatorname{meas}(X ; M_1)
\ll |Q|\frac {V}{\sqrt{\log \log |Q|}}\operatorname{exp}\left(-\frac{V_1^2}{\log \log |Q|} \right)+|Q|\operatorname{exp}\left(-4V \log V \right).
\end{align}

   Note that
\begin{align*}
%%\label{M1bound2}
 \operatorname{exp}\left(-4V \log V \right) \ll \exp \left( -\frac {V}{2A}\log V \right).
\end{align*}

  Moreover, we have for $V \leq \frac{1}{4} \log \log |Q| \cdot \log \log \log |Q|$,
\begin{align*}
%%\label{M1bound1}
  \exp \left(-\frac {V}{2A}\log V \right) \ll \operatorname{exp}\left(-\frac{V_1^2}{\log \log |Q|} \right).
\end{align*}

   On the other hand, if $V \geq  \frac{1}{4} \log \log |Q| \cdot \log \log \log |Q|$,  $V_1=3V/8$ so that
\begin{align}
\label{M1bound2}
\exp \left(-\frac {V}{2A}\log V\right) = \exp \left(-\frac {V}{4}\log V\right) \; \mbox{and} \; \exp \left(-\frac{V_1^2}{\log \log |Q|} \right) = \exp \left(-\frac{9V^2}{64\log \log |Q|} \right) \ll \operatorname{exp}\left(-\frac{V \log V}{64} \right).
\end{align}

  The assertion of the proposition now follows from \eqref{equ:bd-S-2}, \eqref{M1bound1} and \eqref{M1bound2}.
\end{proof}

  Now, Proposition \ref{propNbound} allows us to establish the following weaker upper bounds for moments of the $L$-functions under our consideration.

\begin{prop}
\label{prop: upperbound}
Let $k$ be a positive integer and $\varepsilon>0$ be given.  We have, for large $|Q|$,
\begin{align*}
%%\label{upperbound1}
    \sumstar_{\chi \shortmod Q}  \left| L \left( \tfrac{1}{2}, \chi \right) \right|^{2k}  \ll_k  |Q|(\log_q |Q|)^{k^2+\varepsilon} .
\end{align*}
\end{prop}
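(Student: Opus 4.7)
The plan is to convert the $2k$-th moment into an integral against the distribution function of $\log|L(\tfrac12,\chi)|$ and then feed in Proposition~\ref{propNbound} to estimate the contributions from the various ranges of $V$. Writing $X_\chi := \log|L(\tfrac12,\chi)|$ and using the elementary identity $e^{2kX_\chi} = \int_{-\infty}^{X_\chi} 2k e^{2kV}\,\dif V$, the layer-cake formula gives
\begin{align*}
\sumstar_{\chi \shortmod Q} |L(\tfrac12,\chi)|^{2k} = 2k \int_{-\infty}^{\infty} e^{2kV}\,\mathcal{N}(V,Q)\,\dif V.
\end{align*}
Setting $V_0 := 10\sqrt{\log\log|Q|}$, the contribution from $V \leq V_0$ would be bounded trivially using $\mathcal{N}(V,Q) \leq \phis(Q) \leq |Q|$, yielding at most $|Q|e^{2kV_0} \ll_k |Q|(\log|Q|)^{\varepsilon}$; the single primitive character with $\chi^2 = \chi_0$ (if any) contributes a polylogarithmic amount via Lemma~\ref{lem: logLbound1} taken at $x = |Q|$ and is absorbed into the same error.

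For the three nontrivial ranges I would invoke Proposition~\ref{propNbound}. In the first range $V_0 \leq V \leq \log\log|Q|$, the integrand has the form
\begin{align*}
\frac{|Q|V}{\sqrt{\log\log|Q|}}\exp\Bigl(2kV - \tfrac{V^2}{\log\log|Q|}(1 - o(1))\Bigr).
\end{align*}
Completing the square via the identity $2kV - V^2/\log\log|Q| = k^2\log\log|Q| - (V - k\log\log|Q|)^2/\log\log|Q|$ exhibits the integrand, up to a factor $(\log|Q|)^\varepsilon$ absorbing the $(1 - o(1))$ slack, as $|Q|(\log|Q|)^{k^2}$ times a Gaussian in $V$ of variance $\sim \log\log|Q|$ centered at $V = k\log\log|Q|$; integrating then yields a contribution of size $|Q|(\log_q|Q|)^{k^2+\varepsilon}$. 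The second range $\log\log|Q| < V \leq \tfrac14\log\log|Q|\cdot\log\log\log|Q|$ is handled by the same completion-of-squares argument with $(1 - o(1))$ replaced by $(1 - o(1))^2$, and indeed for fixed $k$ and large $|Q|$ the Gaussian centre $V = k\log\log|Q|$ lies inside this range, so this piece dominates but is again of order $|Q|(\log_q|Q|)^{k^2+\varepsilon}$. In the third range the bound $\mathcal{N}(V,Q) \ll |Q|\exp(-V\log V/64)$ defeats $e^{2kV}$ once $V \geq \tfrac14\log\log|Q|\log\log\log|Q|$, giving a negligible contribution.

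The only place where real care is required is in quantifying the $(1-o(1))$ and $(1-o(1))^2$ factors inside the exponentials. Concretely, inserting the explicit corrections $1 - 4/\log\log\log|Q|$ and $\bigl(1 - 7V/(2\log\log|Q|\log\log\log|Q|)\bigr)^2$ from Proposition~\ref{propNbound} and expanding produces an extra term of order $O\bigl(V^2/(\log\log|Q|\log\log\log|Q|)\bigr)$ in the exponent, which is $o(\log\log|Q|)$ throughout the relevant $V$-range, so the multiplicative loss is $\exp(o(\log\log|Q|)) = (\log|Q|)^{o(1)}$ and is harmlessly absorbed into the final $\varepsilon$. Summing the three ranges together with the trivial piece then yields the claimed bound $|Q|(\log_q|Q|)^{k^2+\varepsilon}$.
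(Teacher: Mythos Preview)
Your approach is essentially the paper's own---the layer-cake identity followed by Proposition~\ref{propNbound}---but your handling of the second range $\log\log|Q| < V \leq \tfrac14\log\log|Q|\cdot\log\log\log|Q|$ contains a gap. You claim that expanding the correction factor $\bigl(1-7V/(2\log\log|Q|\,\log\log\log|Q|)\bigr)^2$ produces an extra exponent of order $O\bigl(V^2/(\log\log|Q|\log\log\log|Q|)\bigr)$ that is $o(\log\log|Q|)$ throughout. Neither statement is true near the right endpoint: writing $L=\log\log|Q|$, the actual extra term is $\tfrac{V^2}{L}\bigl(1-(1-\tfrac{7V}{2L\log\log L})^2\bigr)$, which for $V$ close to $\tfrac14 L\log\log L$ is of order $L(\log\log L)^2$, not $o(L)$. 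So the loss you incur is \emph{not} $(\log|Q|)^{o(1)}$ uniformly, and the completion-of-squares argument as written does not close.

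The paper avoids this by repackaging the three ranges of Proposition~\ref{propNbound} into just two, split at the $k$-dependent threshold $V=4k\log\log|Q|$: for $10\sqrt{L}\le V\le 4kL$ one has $\mathcal{N}(V,Q)\ll |Q|(\log|Q|)^{o(1)}\exp(-V^2/L)$ (here the correction factors \emph{are} $1-o_k(1)$ because $V=O_k(L)$), while for $V>4kL$ one checks directly from all three cases of Proposition~\ref{propNbound} that $\mathcal{N}(V,Q)\ll |Q|(\log|Q|)^{o(1)}\exp(-3kV)$, which kills $e^{2kV}$ outright. Your argument is easily repaired along the same lines: split the second range at $V=Ck\log\log|Q|$ for a suitable constant $C$; below the split your $o(L)$ claim is valid, and above it the crude lower bound $c(V)\ge 1/64$ together with $V/L\ge Ck$ already gives exponential decay. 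You should also note (as the paper does, via \eqref{equ:3.3} with $x=\log|Q|$) that $\mathcal{N}(V,Q)=0$ once $V>6\log|Q|/\log\log|Q|$, so that the layer-cake integral genuinely terminates.
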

\begin{proof}
  We note that
\begin{align}
\label{momentint}
   \sumstar_{\chi \shortmod Q}  \left| L \left( \tfrac{1}{2}, \chi \right) \right|^{2k}  =& -\int\limits_{-\infty}^{+\infty}\exp (2kV) \dif \mathcal{N}(V,Q)
  = 2k\int\limits_{-\infty}^{+\infty}\operatorname{exp}(2kV)\mathcal{N}(V,Q) \dif V,
\end{align}
after integration by parts.  As $N(V, Q) \ll |Q|$, we see that
\begin{align*}
 2k\int\limits_{-\infty}^{10\sqrt{\log \log |Q|}}\operatorname{exp}(2kV)\mathcal{N}(V,Q) \dif V \ll |Q| \int\limits_{-\infty}^{10\sqrt{\log \log |Q|}}\operatorname{exp}(2kV) \dif V \ll |Q|(\log_q |Q|)^{k^2}.
\end{align*}

  Moreover, by taking $x = \log |Q|$ in (\ref{equ:3.3}) and bounding the sum  over $P$ in (\ref{equ:3.3}) trivially, we see that $\mathcal{N} (V,Q) =0$  for $V > 6\log |Q|/\log \log |Q|$. Thus, it remains to consider the $V$-range with $10\sqrt{\log \log |Q|} \leq V \leq 6\log |Q|/\log \log |Q|$. \newline

Now Proposition~\ref{propNbound} yields that for $10\sqrt{\log \log |Q|} \leq V \leq 6\log |Q|/\log \log |Q|$,
\begin{align}
 \label{equ:rough-01}
  \mathcal{N}(V, X)\ll
\left\{
\begin{array}{ll}
  |Q|(\log_q |Q|)^{o(1)}\operatorname{exp}\left(-\frac{V^2}{\log \log |Q|}\right), & 10\sqrt{\log \log |Q|} \leq V \leq 4k \log \log |Q|, \\ \\
   |Q|(\log_q |Q|)^{o(1)}\operatorname{exp}(-3kV), &  V > 4k \log \log |Q|.
\end{array}
\right.
\end{align}
  Applying the bounds in \eqref{equ:rough-01} to evaluate the integral in \eqref{momentint} now leads to the desired result.
\end{proof}

\section{Proof of Proposition \ref{thmlowerbound}}
\label{sec 2'}

\subsection{Lower bounds principle}

   We may assume that $k \neq 1$ throughout as the case $k=1$ for \eqref{orderofmag} is already established.  Let $N$ and
 $M$ be two large natural numbers depending on $k$ only and $\{ \ell_j \}_{1 \leq j \leq R}$ a sequence of even natural
  numbers defined in the following manner.  Let $\ell_1= 2\lceil N \log \log |Q| \rceil$ and $\ell_{j+1} = 2 \lceil N \log \ell_j \rceil$ for $j \geq 1$, where $R$ is the largest natural number satisfying $\ell_R >10^M$.  We may assume that $M$ is chosen so that we have $\ell_{j} >
  \ell_{j+1}^2$ for all $1 \leq j \leq R-1$ and this further implies that
\begin{align}
\label{sumoverell}
  \sum^R_{j=1}\frac 1{\ell_j} \leq \frac 2{\ell_R}.
\end{align}

    We write ${P}_1$ for the set of primes whose norms not exceeding $|Q|^{1/\ell_1^2}$ and
${ P_j}$ for the set of primes whose norms lie in the interval $(|Q|^{1/\ell_{j-1}^2}, |Q|^{1/\ell_j^2}]$ for $2\le j\le R$. For each $1 \leq j \leq R$ and any real number $\alpha$, set
\begin{equation*}
%% \label{defP}
{\mathcal P}_j(\chi) = \sum_{P\in P_j} \frac{\chi(P)}{\sqrt{|P|}}, \quad {\mathcal N}_j(\chi, \alpha) = E_{\ell_j} (\alpha {\mathcal P}_j(\chi)), \quad \mathcal{N}(\chi, \alpha) = \prod_{j=1}^{R} {\mathcal
N}_j(\chi,\alpha),
\end{equation*}
  where we define, for any real number $\ell>0$ and $x$,
\begin{align}
\label{E}
E_{\ell}(x) = \sum_{j=0}^{[\ell]} \frac{x^{j}}{j!}.
\end{align}

  We now apply the lower bounds principle of W. Heap and K. Soundararajan \cite{H&Sound}.  By H\"older's inequality, we get for $0<k<1$,
\begin{align*}
%%\label{basicbound0}
\begin{split}
 \sumstar_{\substack{ \chi \shortmod Q}} & L(\tfrac{1}{2},\chi)  \mathcal{N}(\chi, k-1) \mathcal{N}(\overline{\chi}, k) \\
\leq & \Big ( \sumstar_{\substack{ \chi \shortmod Q }}|L(\tfrac{1}{2},\chi)|^{2k} \Big )^{1/2}\Big ( \sumstar_{\substack{ \chi \shortmod Q
 }}|L(\tfrac{1}{2},\chi) \mathcal{N}(\chi, k-1)|^2   \Big)^{(1-k)/2}\Big ( \sumstar_{\substack{ \chi \shortmod Q }} |\mathcal{N}(\chi,
 k)|^{2/k}|\mathcal{N}(\chi, k-1)|^{2}  \Big)^{k/2}.
\end{split}
\end{align*}

  Similarly, for $k>1$,
\begin{align*}
%%\label{holderkbig}
\begin{split}
   & \sumstar_{\substack{ \chi \shortmod Q }}L(\tfrac{1}{2},\chi)  \mathcal{N}(\chi, k-1) \mathcal{N}(\overline{\chi}, k)
 \leq  \Big ( \sumstar_{\substack{ \chi \shortmod Q }}|L(\tfrac{1}{2},\chi)|^{2k} \Big )^{1/2k}\Big ( \sumstar_{\substack{ \chi
 \shortmod Q }} |\mathcal{N}(\chi, k)\mathcal{N}(\chi, k-1)|^{2k/(2k-1)}  \Big)^{(2k-1)/(2k)}.
\end{split}
\end{align*}

Hence in order to prove Proposition \ref{thmlowerbound}, it suffices to establish the following three propositions.
\begin{proposition}
\label{Prop4} With notations as above, we have
\begin{align}
\label{Aestmation}
\sumstar_{\substack{ \chi \shortmod Q }}L(\tfrac{1}{2},\chi) \mathcal{N}(\overline{\chi}, k) \mathcal{N}(\chi, k-1) \gg \phis(Q)(\log_q |Q|)^{ k^2
} .
\end{align}
\end{proposition}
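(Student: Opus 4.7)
The plan is to apply the Heap--Soundararajan lower-bounds machinery: expand the three factors as Dirichlet polynomials, apply character orthogonality via \eqref{orthrel} to isolate a clean diagonal, and evaluate that diagonal as an Euler product using Mertens' theorem \eqref{lam2p}.

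By \eqref{Lexp}, $L(\hf,\chi) = \sum_{|f|<|Q|}\chi(f)/\sqrt{|f|}$. Expanding each truncated exponential via \eqref{E}, I obtain $\mathcal{N}(\chi,k-1) = \sum_{g_1}\lambda_{k-1}(g_1)\chi(g_1)$ and $\mathcal{N}(\overline\chi,k) = \sum_{g_2}\lambda_k(g_2)\overline{\chi(g_2)}$, where $\lambda_\alpha(g) = |g|^{-1/2}\prod_{P\mid g}\alpha^{v_P(g)}/v_P(g)!$ is supported on monic $g$ built from primes in $\bigcup_j P_j$ satisfying $\Omega(g|_j) \le \ell_j$ for each $j$. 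By \eqref{sumoverell}, such $g$ satisfy $|g| \le |Q|^{\sum_j \ell_j/\ell_j^2} \le |Q|^{2/\ell_R}$, a very small power of $|Q|$.

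Substituting these expansions into the left side of \eqref{Aestmation} and applying \eqref{orthrel} to the primitive inner character sum yields $\phis(Q)$ when $fg_1\equiv g_2\pmod Q$ and $(f,Q)=1$, and $-1$ when the congruence fails (with the same coprimality). Because $|g_1|,|g_2|\le|Q|^{2/\ell_R}$, for $|f| < |Q|/|g_1|$ the product $|fg_1| < |Q|$, so by the ultrametric inequality $|fg_1 - g_2| < |Q|$ and the congruence is equivalent to the equality $fg_1 = g_2$, yielding the \emph{clean diagonal}
\[
\phis(Q)\sum_{g_1\mid g_2}\lambda_{k-1}(g_1)\lambda_k(g_2)\sqrt{|g_1|/|g_2|}.
\]
This factors as an Euler product over $P\in\bigcup_j P_j$ with local factor at $P\in P_j$ equal to $F_P = \sum_{0\le a\le b\le\ell_j}(k-1)^a k^b/(a!\,b!\,|P|^b)$. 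Extending the $(a,b)$-sum to infinity introduces only negligible error, thanks to the rapid factorial decay beyond $b > \ell_j \gg \log\log|Q|$ and the doubly fast recursion $\ell_{j-1}\ge\ell_j^2$; the completed factor expands as $F_P = 1 + k^2/|P| + O_k(|P|^{-2})$. Applying \eqref{lam2p} to primes of norm at most $|Q|^{1/\ell_R^2}$ gives $\prod_j\prod_{P\in P_j} F_P \asymp_k (\log_q|Q|)^{k^2}$, producing the main term of the required size $\phis(Q)(\log_q|Q|)^{k^2}$.

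The main obstacle is bounding the remaining error contributions by $o(\phis(Q)(\log_q|Q|)^{k^2})$: (i) the \emph{boundary} terms with $|f|\ge|Q|/|g_1|\ge|Q|^{1-2/\ell_R}$, where the congruence need not force equality; and (ii) the \emph{sub-principal} $-1$ contribution from $\sumstar = \sum_\chi - \chi_0$. A naive $L^1$-style bound $\sum_g|\lambda_\alpha(g)| = \prod_j E_{\ell_j}(|\alpha|S_j)$ with $S_j = \sum_{P\in P_j}|P|^{-1/2}$ is inadequate because each $S_j$ grows like a (small but positive) power of $|Q|$. Instead, I apply Cauchy--Schwarz directly to the $\chi$-sum to separate the $L$-factor from the Dirichlet polynomials, then control $\sumstar_\chi|L(\hf,\chi)|^{2k}$ via the weaker upper bound Proposition \ref{prop: upperbound} and $\sumstar_\chi|\mathcal{N}(\chi,\alpha)|^{2m}$ via the mean-value estimate Lemma \ref{lem:2.5}. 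Combined with the extra saving $1/\sqrt{|f|}\le|Q|^{-1/2+1/\ell_R}$ in the boundary regime, this yields an error of size $O_k(|Q|^{1-\delta})$ for some $\delta=\delta(k)>0$, strictly dominated by the main term $\asymp|Q|(\log_q|Q|)^{k^2}$. This Cauchy--Schwarz argument together with the weak moment input is the principal technical step.
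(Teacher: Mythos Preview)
Your diagonal computation is essentially the paper's, but your treatment of the error terms has a genuine gap. You claim that Cauchy--Schwarz on the $\chi$-sum, combined with Proposition~\ref{prop: upperbound} and Lemma~\ref{lem:2.5}, yields $O_k(|Q|^{1-\delta})$. It does not. Writing $L(\hf,\chi)=L_{\mathrm{main}}+L_{\mathrm{tail}}$ with the tail over $|f|\ge |Q|^{1-2/\ell_R}$, Cauchy--Schwarz gives
\[
\Big|\sumstar_\chi L_{\mathrm{tail}}(\chi)\,\mathcal{N}(\overline\chi,k)\mathcal{N}(\chi,k-1)\Big|
\le \Big(\sumstar_\chi|L_{\mathrm{tail}}|^2\Big)^{1/2}\Big(\sumstar_\chi|\mathcal{N}(\chi,k)\mathcal{N}(\chi,k-1)|^2\Big)^{1/2},
\]
and by orthogonality the first factor is $\ll(|Q|\log|Q|)^{1/2}$ while the second is $\ll(|Q|(\log|Q|)^{C})^{1/2}$; the product is $\ll |Q|(\log|Q|)^{C'}$, with no power saving. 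Proposition~\ref{prop: upperbound} is of no help here since $L_{\mathrm{tail}}$ is not $L(\hf,\chi)$ itself, and your ``extra saving $1/\sqrt{|f|}\le |Q|^{-1/2+1/\ell_R}$'' is a pointwise bound on a single term, not on the sum $L_{\mathrm{tail}}(\chi)$.

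You also misdiagnose which naive bound is relevant. It is true that $\sum_g|\lambda_\alpha(g)|=\prod_j E_{\ell_j}(|\alpha|S_j)$ is a power of $|Q|$, but that sum never arises. What is needed is the \emph{pointwise} coefficient bound $|x_a|,|y_b|\ll_k(\log|Q|)^{O_k(1)}$ (immediate from $\Omega(a)\le 2\ell_1\ll\log\log|Q|$) together with the \emph{shortness} of $\mathcal{N}$. The paper's argument is then elementary: the trivial estimate $|\mathcal{N}(\overline\chi,k)\mathcal{N}(\chi,k-1)|\ll |Q|^{4/10^M}$ disposes of the $\chi_0$ term at cost $O(|Q|^{1/2+4/10^M})$, allowing passage to the full sum $\sum_\chi$; after \eqref{orthrel} the off-diagonal condition $af\equiv b\pmod Q$ with $af\neq b$ forces $af=b+lQ$ with $l$ monic and $|l|\le |Q|^{2/10^M}$, and summing over the small parameters $b,l$ (with $1/\sqrt{|af|}\asymp 1/\sqrt{|lQ|}$ and a divisor bound for the choices of $a$) gives $\ll |Q|^{1/2+O(1/10^M)}$, a genuine power saving. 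No moment input or Cauchy--Schwarz is required for this step.
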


\begin{proposition}
\label{Prop5} With notations as above, we have
\begin{align}
\label{Nestmation}
 \sumstar_{\substack{ \chi \shortmod Q }}|L(\tfrac{1}{2},\chi)\mathcal{N}(\chi, k-1)|^2  \ll \phis(Q)(\log_q |Q|)^{ k^2} .
\end{align}
\end{proposition}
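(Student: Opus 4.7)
The plan is to use the Dirichlet polynomial expansion of $|L(\tfrac{1}{2},\chi)|^2$ from \eqref{lsquareapprox} and then apply character orthogonality. Writing $\mathcal{N}(\chi,k-1) = \sum_n b(n)\chi(n)/\sqrt{|n|}$, the coefficients $b(n)$ are supported on $n$ of degree at most $d(Q)\sum_{j=1}^R 1/\ell_j \leq 2d(Q)/\ell_R$ thanks to \eqref{sumoverell}, so $|\mathcal{N}(\chi,k-1)|^2$ is a Dirichlet polynomial in $\chi$ of length $|Q|^{O(1/\ell_R)} = |Q|^{o(1)}$. The error $O(|Q|^{-1/2+\varepsilon})$ from \eqref{lsquareapprox}, multiplied by $|\mathcal{N}(\chi,k-1)|^2$ and summed over $\chi$ via Lemma \ref{lem:2.5}, contributes $\ll |Q|^{1/2+\varepsilon}$, which is negligible.

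After substitution and interchanging summations, the main quantity reduces via \eqref{orthrel} to
\[
2\phis(Q)\sum_{\substack{|fg|<|Q|,\ (fgnm,Q)=1 \\ fn\equiv gm \pmod Q}} \frac{b(n)\overline{b(m)}}{\sqrt{|fgnm|}}.
\]
Since $|fn|,|gm|\leq|Q|^{1+O(1/\ell_R)}$ with $\ell_R$ large, the congruence splits naturally into a diagonal contribution where $fn=gm$ and an off-diagonal contribution where $fn-gm=cQ$ for some nonzero $c$ with $|c|\leq|Q|^{O(1/\ell_R)}$; the latter is controlled by parameterizing solutions of the linear equation $fn=gm+cQ$ in $(f,g)$, which for fixed $n,m,c$ has at most $\ll|Q|/|m_1|$ admissible pairs, and then summing against the rapid decay of $b$ to conclude that the off-diagonal is $\ll |Q|^{1+O(1/\ell_R)}$, absorbed by the target once $M$ (and hence $\ell_R$) is chosen large enough depending on $k$.

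On the diagonal $fn=gm$, I set $d=(n,m)$, $n=dn_1$, $m=dm_1$ with $(n_1,m_1)=1$, so that $f=m_1 t$ and $g=n_1 t$ for some monic $t$ with $|n_1 m_1 t^2|<|Q|$. The inner sum over $t$ evaluates, via the function-field identity $\sum_{t\text{ monic},\ |t|<q^J}1/|t|=J$, to $\tfrac{1}{2}\log_q(|Q|/|n_1 m_1|)$. The residual double sum over $n,m$ factors as an Euler product over primes in $\bigcup_j P_j$, whose local factor at $P$ is
\[
\sum_{\alpha,\beta\geq 0} \frac{(k-1)^{\alpha+\beta}}{\alpha!\beta!|P|^{\max(\alpha,\beta)}} = 1+\frac{k^2-1}{|P|}+O(|P|^{-2}),
\]
after replacing the truncated exponentials $E_{\ell_j}$ by full exponentials $\exp((k-1)\mathcal{P}_j(\chi))$. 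Multiplying out the Euler product via Mertens' estimate \eqref{lam2p} yields $\ll_k(\log_q|Q|)^{k^2-1}$, and combining with the $\log_q|Q|$ from the $t$-sum gives the desired bound $\phis(Q)(\log_q|Q|)^{k^2}$.

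The main obstacle is rigorously replacing the truncations $E_{\ell_j}$ in each $\mathcal{N}_j$ by full exponentials within the Euler product evaluation. The tail error at scale $j$ is controlled by the standard bound $(C|k-1|)^{\ell_j}/\ell_j!$, which is negligible thanks to $\sum_{P\in P_j}1/|P|=O(\log\ell_{j-1})\ll\ell_j$ from \eqref{lam2p}; the nested hierarchy $\ell_j>\ell_{j+1}^2$ ensures these errors compound favorably across the $R$ scales. A secondary technicality, the lower-order correction $-\log_q|n_1 m_1|$ arising in the $t$-sum, contributes an Euler product analogous to the main one but of relative size $O(\log\log_q|Q|/\log_q|Q|)$, and is absorbed into the $k$-dependent constant.
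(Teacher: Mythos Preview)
Your approach mirrors the paper's: expand $|L(\tfrac12,\chi)|^2$ via \eqref{lsquareapprox}, apply orthogonality, split into diagonal and off-diagonal, and evaluate the diagonal with the gcd substitution followed by an Euler-product/Mertens argument (the paper defers this last step to \cite{Gao2021-4}). The diagonal part of your sketch is sound and matches the paper's route.

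There is, however, a genuine gap in your off-diagonal estimate. You claim the off-diagonal contributes $\ll |Q|^{1+O(1/\ell_R)}$ and that this is ``absorbed by the target once $M$ is chosen large enough''. That is false: the target is $\phis(Q)(\log_q|Q|)^{k^2}\asymp |Q|(\log_q|Q|)^{k^2}$, and for any fixed $\ell_R$ (however large) one has $|Q|^{1+c/\ell_R}\gg |Q|(\log_q|Q|)^{k^2}$ whenever $c>0$. What is actually required --- and what the paper obtains --- is a genuine power \emph{saving} $\ll |Q|^{1-\varepsilon}$. The paper's device is as follows: if $af\neq bg$ with $af\equiv bg\pmod Q$ and (say) $|af|\ge|bg|$, write $af=bg+lQ$; then $|lQ|\le|af|$ forces $1/\sqrt{|af|}\le 1/\sqrt{|lQ|}$, while $|bg|\le\sqrt{|abfg|}\le|Q|^{1/2+O(1/\ell_R)}$ and $|l|\le|Q|^{O(1/\ell_R)}$. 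Summing $1/\sqrt{|b|}$, $1/\sqrt{|g|}$, $1/\sqrt{|lQ|}$ over these ranges gives a total contribution $\ll |Q|\cdot|Q|^{-1/4+O(1/\ell_R)}\ll |Q|^{1-\varepsilon}$. Your parametrisation ``for fixed $n,m,c$ has at most $\ll|Q|/|m_1|$ admissible pairs'' is both undefined ($m_1$ appears only later, in the diagonal discussion) and, as stated, does not recover this saving.

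A minor point: the relation \eqref{orthrel} is for the full character sum and yields $\varphi(Q)$, not $\phis(Q)$. Since $Q$ is prime the discrepancy is the principal-character term, which the paper disposes of separately (its $D=Q$ case); you should at least acknowledge and bound this extra piece rather than silently replace $\varphi(Q)$ by $\phis(Q)$.
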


\begin{proposition}
\label{Prop6} With notations as above, we have
\begin{align*}
%%\label{L2estmation}
\max \Big ( \sumstar_{\substack{ \chi \shortmod Q }} |\mathcal{N}(\chi,
 k)|^{2/k}|\mathcal{N}(\chi, k-1)|^{2}, \sumstar_{\substack{ \chi
 \shortmod Q }} |\mathcal{N}(\chi, k)\mathcal{N}(\chi, k-1)|^{2k/(2k-1)} \Big )   \ll \phis(Q)(\log_q |Q|)^{ k^2} .
\end{align*}
\end{proposition}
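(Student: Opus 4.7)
The plan is to treat both expressions uniformly via the Heap--Soundararajan pointwise-comparison strategy, reducing each to a Dirichlet polynomial whose mean is accessible via the orthogonality relation \eqref{orthrel}. The main ingredient is the elementary truncated-exponential estimate: the Taylor bound $|\exp(w) - E_\ell(w)| \leq |w|^{\ell+1}\exp(|w|)/(\ell+1)!$ combined with Stirling \eqref{Stirling} implies that
\[
E_\ell(w) = \exp(w)\bigl(1 + O(e^{-\ell})\bigr) \quad \text{whenever } |w| \leq \ell/(8e),
\]
and consequently $|E_\ell(\alpha z)|^\beta = \exp(\alpha\beta \Re z)(1+O(e^{-\ell/2}))$ on the same range, uniformly for real $\beta$ bounded in terms of $k$. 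Taking the product of these estimates with $(\alpha,\beta) = (k,2/k)$ and $(k-1,2)$, and noting the cancellation $(2/k)\cdot k + 2(k-1) = 2k$, yields on the \emph{typical} set $\mathcal{T} = \{\chi : |\mathcal{P}_j(\chi)| \leq c\,\ell_j \text{ for all } j\}$ (for a suitably small $c = c(k) > 0$) the pointwise bound
\[
|\mathcal{N}(\chi,k)|^{2/k}|\mathcal{N}(\chi,k-1)|^2 \leq (1+o(1))\prod_{j=1}^R E_{2\ell_j}\bigl(k(\mathcal{P}_j(\chi) + \overline{\mathcal{P}_j(\chi)})\bigr);
\]
the identical upper bound holds for $|\mathcal{N}(\chi,k)\mathcal{N}(\chi,k-1)|^{2k/(2k-1)}$ since $(2k-1)\cdot 2k/(2k-1) = 2k$.

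Next, I would evaluate the mean of the right-hand side over the primitive characters. Expanding each $E_{2\ell_j}$ via the binomial theorem exhibits the product as a Dirichlet polynomial $\sum_{f,g}c(f)\overline{c(g)}\chi(f)\overline{\chi(g)}|fg|^{-1/2}$ whose coefficients are supported on monic $f,g$ each a product of at most $2\ell_j$ primes drawn from $P_j$; in particular $|fg| \leq |Q|^{4\sum_j 1/\ell_j} \leq |Q|^{8/\ell_R} < |Q|$ by \eqref{sumoverell}. The orthogonality relation \eqref{orthrel} then collapses the character sum to the diagonal $f = g$, which factors into local Euler-type factors over primes $P \in \bigsqcup_j P_j$. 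Each local factor evaluates to $1 + k^2/|P| + O(|P|^{-3/2})$, and summing by the function-field Mertens estimate \eqref{lam2p} produces the typical bound
\[
\sumstar_{\chi \in \mathcal{T}}|\mathcal{N}(\chi,k)|^{2/k}|\mathcal{N}(\chi,k-1)|^2 \ll \phis(Q)\exp\!\Bigl(k^2 \!\sum_{|P| \leq |Q|^{1/\ell_R^2}}\!\tfrac{1}{|P|}\Bigr) \ll \phis(Q)(\log_q|Q|)^{k^2}.
\]

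For the atypical complement, fix the smallest $j_0$ with $|\mathcal{P}_{j_0}(\chi)| > c\,\ell_{j_0}$. The crude bound $|\mathcal{N}(\chi,\alpha)|^\beta \leq \exp\!\bigl(|\alpha|\beta\sum_j |\mathcal{P}_j(\chi)|\bigr)$, Cauchy--Schwarz, and Lemma \ref{lem:2.5} applied with $m = \ell_{j_0}$ (so that $y^m \leq |Q|$) produce the moment estimate $\sumstar_\chi |\mathcal{P}_{j_0}(\chi)|^{2\ell_{j_0}} \ll \phis(Q)\,\ell_{j_0}!\,V_{j_0}^{\ell_{j_0}}$ with $V_{j_0} := \sum_{P \in P_{j_0}} |P|^{-1}$. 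Combining this with the denominator $(c\ell_{j_0})^{2\ell_{j_0}}$ forced by the atypicality and Stirling \eqref{Stirling} shows that the atypical contribution is $\ll \phis(Q)(V_{j_0}/(c^2 e\,\ell_{j_0}))^{\ell_{j_0}}(\log_q|Q|)^{k^2}$. By the defining inequality $\ell_j \geq 2N\log\ell_{j-1}$ together with the function-field Mertens estimate, for $N$ sufficiently large in terms of $k$ one has $V_{j_0} \leq c^2 e\,\ell_{j_0}/2$, so this contribution enjoys geometric decay in $\ell_{j_0}$ and is negligible. The main obstacle is that the exponents $2/k$ (for $0<k<1$) and $2k/(2k-1)$ (for $k>1$) are in general non-integer, so $|\mathcal{N}(\chi,\alpha)|^\beta$ cannot be interpreted directly as a Dirichlet polynomial; the truncated-exponential comparison above circumvents this, but its limited range of validity is precisely what forces the typical/atypical decomposition.
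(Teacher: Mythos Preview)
The paper does not give its own proof of this proposition; it simply states that the argument is analogous to \cite[Proposition~3.5]{Gao2021-4} and omits the details. Your outline follows exactly the Heap--Soundararajan strategy underlying that reference, and the treatment of the typical set $\mathcal{T}$ is correct: the truncated-exponential comparison converts the non-integer powers into $\prod_j E_{2\ell_j}(2k\Re\mathcal{P}_j(\chi))$, a genuine short Dirichlet polynomial of length below $|Q|$, after which orthogonality \eqref{orthrel} and Mertens \eqref{lam2p} yield the $(\log_q|Q|)^{k^2}$ bound.

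There is, however, a gap in your handling of the atypical complement. The sentence ``the crude bound $|\mathcal{N}(\chi,\alpha)|^\beta \leq \exp\bigl(|\alpha|\beta\sum_j|\mathcal{P}_j(\chi)|\bigr)$, Cauchy--Schwarz, and Lemma~\ref{lem:2.5}'' does not assemble into a valid argument: the exponential is not a Dirichlet polynomial and cannot be averaged, and Cauchy--Schwarz would leave you needing $\sumstar_\chi|\mathcal{N}(\chi,\alpha)|^{2\beta}$, which is the very quantity under study. More concretely, having fixed the smallest bad index $j_0$ tells you nothing about $|\mathcal{P}_j(\chi)|$ for $j>j_0$, so the factors $|\mathcal{N}_j(\chi,\alpha)|^\beta$ for those $j$ are genuinely uncontrolled by your sketch. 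The remedy used in \cite{Gao2021-4,H&Sound} is a pointwise, per-index inequality valid for \emph{every} $\chi$: when $|\mathcal{P}_j|\le c\ell_j$ one has $|\mathcal{N}_j(\chi,k)|^{2/k}|\mathcal{N}_j(\chi,k-1)|^2 \ll |\mathcal{N}_j(\chi,k)|^2$ (an \emph{integer} power, hence already a polynomial), while when $|\mathcal{P}_j|>c\ell_j$ the same quantity is bounded by $(C_k|\mathcal{P}_j|/\ell_j)^{r_j}$ for an even integer $r_j\asymp_k\ell_j$. Summing these two alternatives gives, for every $j$, a single polynomial upper bound; the product over $j$ then expands into at most $2^R$ short Dirichlet polynomials, each amenable to orthogonality. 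The terms carrying at least one ``atypical'' factor pick up precisely the decay $(V_j/(c^2 e\,\ell_j))^{\ell_j}$ you identified, and the argument closes. Your endpoint is right, but the polynomial-replacement step that makes the averaging legitimate is missing.
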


     Our proofs of the above propositions are similar to those for Propositions 3.3--3.5 in \cite{Gao2021-4}. We shall therefore omit the proof of Proposition \ref{Prop6} and be brief on the proofs of Propositions~\ref{Prop4} and~\ref{Prop5}.

\subsection{Proof of Proposition \ref{Prop4}}
\label{sec 4}

   Let $\Omega(f)$ denote the number of distinct prime powers dividing $f$ and $w(f)$ the multiplicative function such that
    $w(P^{\alpha}) = \alpha!$ for prime powers $P^{\alpha}$.  Let $b_j(f), 1 \leq j \leq R$ be functions such that $b_j(f)=1$ when $f$ is
    composed of at most $\ell_j$ primes, all from the interval $P_j$. Otherwise, we define $b_j(f)=0$. We use these notations to see that for
    any real number $\alpha$,
\begin{equation*}
%%\label{5.1}
{\mathcal N}_j(\chi, \alpha) = \sum_{f_j} \frac{1}{\sqrt{|f_j|}} \frac{\alpha^{\Omega(f_j)}}{w(f_j)}  b_j(f_j) \chi(f_j), \quad 1\le j\le R.
\end{equation*}
    Each ${\mathcal N}_j(\chi, \alpha)$ is a short Dirichlet polynomial since $b_j(f_j)=0$ unless $|f_j| \leq
    (|Q|^{1/\ell_j^2})^{\ell_j}=|Q|^{1/\ell_j}$. It follows from this that ${\mathcal N}(\chi, k)$ and ${\mathcal N}(\chi, k-1)$ are short Dirichlet
    polynomials whose lengths are both at most $|Q|^{1/\ell_1+ \ldots +1/\ell_R} < |Q|^{2/10^{M}}$ by \eqref{sumoverell}. Moreover, it is readily
    checked that for each $\chi$ modulo $Q$ (including the case $\chi=\chi_0$),
\begin{align}
\label{prodNbound}
 & {\mathcal N}(\overline \chi, k){\mathcal N}(\chi, k-1) \ll |Q|^{2(1/\ell_1+ \ldots +1/\ell_R)} < |Q|^{4/10^{M}}.
\end{align}

  We deduce from the above and Lemma~\ref{PropDirpoly} that
\begin{align*}
%%\label{Aestmation1}
\begin{split}
 \sumstar_{\substack{ \chi \shortmod Q }}L(\tfrac{1}{2},\chi) \mathcal{N}(\overline{\chi}, k) \mathcal{N}(\chi, k-1) = & \sumstar_{\substack{ \chi \shortmod Q }}\sum_{|f| <|Q|}\frac {\chi(f)}{\sqrt{|f|}}\mathcal{N}(\overline{\chi}, k) \mathcal{N}(\chi,
k-1) \\
= & \sum_{\substack{ \chi \shortmod Q }}\sum_{|f| <|Q|}\frac {\chi(f)}{\sqrt{|f|}}\mathcal{N}(\overline{\chi}, k) \mathcal{N}(\chi,
 k-1)+O(|Q|^{1/2+4/10^{M}}) \\
=& \varphi(Q) \sum_{a} \sum_{b} \sum_{\substack{|f| < |Q| \\ af \equiv b \bmod Q}}\frac {x_a y_b}{\sqrt{|abf|}}+O(|Q|^{1/2+4/10^{M}}),\
\end{split}
\end{align*}
  where the last estimation above follows from \eqref{prodNbound} and where we write for simplicity
\begin{align*}
%%\label{Nexpression}
 {\mathcal N}(\chi, k-1)= \sum_{|a|  \leq |Q|^{2/10^{M}}} \frac{x_a}{\sqrt{|a|}} \chi(a) \quad \mbox{and} \quad \mathcal{N}(\overline{\chi}, k) = \sum_{|b|  \leq  |Q|^{2/10^{M}}} \frac{y_b}{\sqrt{|b|}}\overline{\chi}(b).
\end{align*}

  We now consider the contribution from the terms $af \neq b$ in the last expression of \eqref{Aestmation}. As $|b| <|Q|$, we see that $af \equiv b \bmod Q$ occurs only when $d(af)>d(b)$ so that we may write $af=b+l Q$ with $l \in A$.  Since $af$ is monic, so is $b+l Q$.  As $d(b+lQ)=d(lQ)$, this implies that $l$ is monic. Note further that $af=b+l Q$ implies that $|l| \leq  |Q|^{2/10^{M}}$, we deduce, together with the observation that $x_a, y_b \ll 1$, that the total contribution from these terms is
\begin{align*}
%%\label{Aestmation}
 \ll & \varphi(Q)  \sum_{|b|  \leq |Q|^{2/10^{M}}}  \sum_{|l| \leq |Q|^{2/10^{M}}}\frac {1}{\sqrt{|blQ|}} \ll |Q|^{1/2+2/10^{M}}.
\end{align*}

  We thus obtain
\begin{align*}
%%\label{Aestmation}
&  \sumstar_{\substack{ \chi \shortmod Q }}L(\tfrac{1}{2},\chi) \mathcal{N}(\overline{\chi}, k) \mathcal{N}(\chi, k-1)
\gg \varphi(Q) \sum_{a} \sum_{b} \sum_{\substack{|f| < |Q| \\ af = b }}\frac {x_a y_b}{\sqrt{|abf|}}
= \varphi(Q) \sum_{b} \frac {y_b}{|b|} \sum_{\substack{a, f \\ af = b }}x_a=\varphi(Q) \sum_{b} \frac {y_b}{|b|} \sum_{\substack{a | b }}x_a,
\end{align*}
  where the last equality above follows from the observation that $b \leq |Q|^{2/10^{M}}<|Q|$. \newline

We then proceed as in the proof of Proposition 3.3 in \cite{Gao2021-4}, getting the desired estimate in \eqref{Aestmation}.

\subsection{Proof of Proposition \ref{Prop5}}
\label{sec 5}

  Recall from  Section \ref{sec 4} that ${\mathcal N}(\chi, k-1)$ is a short Dirichlet polynomial with length not exceeding $|Q|^{2/10^{M}}$. This allows us to write
$$ |\mathcal{N}(\chi, k-1)|^2 =\sum_{|a|,|b| \leq |Q|^{2r_k/10^{M}}} \frac{u_a u_b}{\sqrt{ab}}\chi(a)\overline{\chi}(b),$$
 where $u_a, u_b$ are real numbers satisfying
\begin{align}
\label{uest}
   0 \leq u_a, u_b \leq 1.
\end{align}

 We now apply \eqref{lsquareapprox} to estimate the left-hand side expression in \eqref{Nestmation}. As ${\mathcal N}(\chi, k-1)$ is a short Dirichlet polynomial, this together with \eqref{uest} implies the contribution of the $O$-term in \eqref{lsquareapprox} is negligible.  It follows that
\begin{align}
\label{LNsquaresum}
\begin{split}
\sumstar_{\substack{ \chi \shortmod Q }} |L(\tfrac{1}{2},\chi)|^2|\mathcal{N}(\chi, k-1)|^2
\ll & \sum_{|a|,|b| \leq |Q|^{2r_k/10^{M}}} \frac{u_a u_b}{\sqrt{|ab|}} \sum_{|fg|<|Q|} \frac{1}{\sqrt{|fg|}} \ \sumstar_{\substack{ \chi \shortmod Q }} \chi(af) \overline{\chi}(bg) \\
\ll & \sum_{D | Q}\mu_A(D)\varphi(Q/D) \sum_{|a|,|b| \leq |Q|^{2r_k/10^{M}}} \frac{u_a u_b}{\sqrt{|ab|}} \sum_{\substack{|fg|<|Q| \\ (fg, Q)=1 \\ af \equiv bg \,\shortmod {Q/D}}}
\frac{1}{\sqrt{|fg|}},
\end{split}
\end{align}
  where we denote $\mu_A$ for the M\"obius function on $A$ and the last estimation above follows from a simple orthogonality relation, which asserts that for $(uv, Q)=1$, we have
\begin{align*}
\begin{split}
  \sumstar_{\substack{ \chi \shortmod Q }} \chi(u) \overline{\chi}(v)=\sum_{\substack{D | Q \\ u \equiv v \shortmod {Q/D}}}\mu_A(D)\varphi(Q/D).
\end{split}
\end{align*}

   To estimation of the last display in \eqref{LNsquaresum}, we first notice that the contribution of $D=Q$ in the last display in \eqref{LNsquaresum} is
\begin{align}
\label{LNsquaresumD=Q}
\begin{split}
 \ll  |Q|^{\varepsilon}  \sum_{|a|,|b| \leq |Q|^{2r_k/10^{M}}} & \frac{1}{\sqrt{|ab|}} \sum_{\substack{|fg|<|Q| \\ (fg, Q)=1 }}
\frac{1}{\sqrt{|fg|}} \ll  |Q|^{2r_k/10^{M}+\varepsilon} \sum_{\substack{|fg|<|Q|}}\frac{1}{\sqrt{|fg|}} \\
\ll & |Q|^{2r_k/10^{M}+\varepsilon} \sum_{\substack{|f|<|Q|  }}\frac {\tau_A(f)}{\sqrt{|f|}} \ll |Q|^{1/2+2r_k/10^{M}+\varepsilon},
\end{split}
\end{align}
  where we denote $\tau_A$ for the divisor function on $A$ and the last estimation above follows from the observation that, similar to the integer case given in \cite[Theorem 2.11]{MVa}, for any $\varepsilon >0$,
\begin{align*}
%%\label{dbound}
\begin{split}
 \tau_A(f) \ll |f|^{\varepsilon}.
\end{split}
\end{align*}

   We now conclude from  \eqref{LNsquaresum} and \eqref{LNsquaresumD=Q} that the contribution of $D=Q$ to \eqref{LNsquaresum} is negligible and
\begin{align}
\label{LNsquaresum1}
\begin{split}
& \sumstar_{\substack{ \chi \shortmod Q }}|L(\tfrac{1}{2},\chi)|^2|\mathcal{N}(\chi, k-1)|^2
\ll  \varphi(Q) \sum_{|a|,|b| \leq |Q|^{2r_k/10^{M}}} \frac{u_a u_b}{\sqrt{|ab|}} \sum_{\substack{|fg|<|Q| \\ (fg, Q)=1 \\ af \equiv bg \,\shortmod {Q}}}
\frac{1}{\sqrt{|fg|}}.
\end{split}
\end{align}

 We now estimate the contribution of the terms with $af \neq bg$ in \eqref{LNsquaresum1}.  We may assume that $d(af) \geq d(bg)$ without loss of generality and write $af=bg+lQ$ for some $0 \neq l \in A$.  It follows that $d(lQ) \leq d(bg+lQ) \leq d(af)$, so that $|lQ| \leq |af| \leq |Q|^{1+2r_k/10^{M}}$ which implies that $|l| \leq |Q|^{2r_k/10^{M}}$. Moreover, $1/\sqrt{|af|} \ll 1/\sqrt{|lQ|}$ and $|fg| < |Q|$ implies $|abfg|  <  |abQ| \leq |Q|^{1 +4r_k/10^{M}}$, so that we have $|g| \leq |bg| \leq |Q|^{1/2 + 2r_k/10^{M}}$. We then deduce that the contribution from the terms $af \neq bg$ is
\begin{align*}
  \ll  |Q| \sum_{|a|,|b| \leq |Q|^{2r_k/10^{M}}} & \frac{1}{\sqrt{|ab|}} \sum_{\substack{f, g \\ |fg| < |Q| \\ af = bg+lQ \\ |l| \geq 1}} \frac{1}{\sqrt{|fg|}} \\
\ll &  |Q| \sum_{|b| \leq |Q|^{2r_k/10^{M}}} \frac{1}{\sqrt{|b|}} \sum_{\substack{|g|  \leq |Q|^{1/2+2r_k/10^{M}}}} \frac{1}{\sqrt{|g|}}\sum_{\substack{|l| \leq |Q|^{2r_k/10^{M}}}} \frac{1}{\sqrt{|lQ|}} \ll  |Q|^{1-\varepsilon}.
\end{align*}

Thus it remains to consider the terms $af = bg$ in the last expression of \eqref{LNsquaresum}. We write $f = \alpha b/(a,b)$, $g =
 \alpha a/(a,b)$ for some $\alpha \in A$ and these terms in question are
\begin{align}
\label{mainterm}
\begin{split}
\ll & \phis(Q) \sum_{|a|,|b| \leq |Q|^{2r_k/10^{M}}} \frac{|(a,b)|}{|ab|} u_a u_b  \sum_{\substack{ |\alpha| < |Q||(a,b)|^2/|ab| \\ (\alpha, Q)=1}} \frac{1}{|\alpha|} \ll  \phis(Q)\sum_{|a|,|b| \leq |Q|^{2r_k/10^{M}}} \frac{|(a,b)|}{|ab|} u_a u_b  \sum_{\substack{ |\alpha| < |Q||(a,b)|^2/|ab|}} \frac{1}{|\alpha|},
\end{split}
\end{align}
 where the last estimation above follows by observing that $|Q||(a,b)|^2/|ab|<|Q|$. \newline

  To evaluate the last sum in \eqref{mainterm}, we set $X=|Q||(a,b)|^2/|ab|$.  This gives
\begin{align}
\label{alphasum}
& \sum_{\substack{ |\alpha| < X}} \frac{1}{|\alpha|}= \sum^{d(X)-1}_{n=0}q^{-n}q^n=d(X).
\end{align}

Now \eqref{alphasum} renders that \eqref{mainterm} is
\begin{align*}
& \ll \phis(Q) (r_k\ell_{v+1})^2 \Big( \frac{12 r_k }{e } \Big)^{2r_k\ell_{v+1}} \sum_{|a|,|b| \leq |Q|^{2r_k/10^{M}}} \frac{|(a,b)|}{|ab|}  u_a u_b \Big ( \log_q |Q|+2 \log_q |(a, b)|-\log_q |a| -\log_q |b| \Big ).
\end{align*}

We then proceed as in the proof of \cite[Proposition 3.4]{Gao2021-4}, getting the estimate in \eqref{Nestmation} and completing the proof of the proposition.

\section{Proof of Proposition \ref{thmupperbound}}

  Exponentiating both sides of \eqref{equ:3.3'} gives that
\begin{align}
\label{basicest0}
\begin{split}
 & \left| L \left( \tfrac{1}{2}, \chi \right) \right|^{2k}  \ll
 \exp \left(2k \Re \left( \sum_{\substack{  |P| \leq x }} \frac{\chi (P)}{|P|^{1/2+1/\log x}}
 \frac{\log (x/|P|)}{\log x} +
 \sum_{\substack{  |P| \leq x^{1/2} }} \frac{\chi (P^2)}{|P|^{1+2/\log x}}  \frac{\log (x/|P|^2)}{2\log x}
 +\frac{\log |Q| }{\log x}\right) \right ).
\end{split}
 \end{align}
 
  Upon setting $x=\log \log |Q|$ in \eqref{basicest0} and estimating the right-hand expression trivially, we see that $\left| L \left( \tfrac{1}{2}, \chi \right) \right|^{2k} \ll Q$. As the right-hand side expression of \eqref{upperbound} is easily seen to be $ \gg Q$ and there is at most one primitive character $\chi$ modulo $Q$ such that $\chi^2 =\chi_0$, we deduce from Lemma \ref{lem: logLbound1} that we may assume that the estimation given in \eqref{equ:3.3} is satisfied by all $\chi$. Thus, we obtain upon exponentiating both sides of \eqref{equ:3.3} that
\begin{align}
\label{basicest}
\begin{split}
 & \left| L \left( \tfrac{1}{2}, \chi \right) \right|^{2k}  \ll
 \exp \left(2k \Re \left( \sum_{\substack{  |P| \leq x }} \frac{\chi (P)}{|P|^{1/2+1/\log x}}
 \frac{\log (x/|P|)}{\log x} 
 +\frac{\log |Q| }{\log x}\right) \right ).
\end{split}
 \end{align} 

Following the approach by A. J. Harper \cite{Harper}, we define, for a large number $T$,
$$ \alpha_{0} = \frac{\log 2}{\log |Q| }, \;\;\;\;\; \alpha_{i} = \frac{20^{i-1}}{(\log\log |Q| )^{2}} \;\;\; \mbox{for} \; i \geq 1 \quad \mbox{and} \quad
\mathcal{J} = \mathcal{J}_{k, Q } = 1 + \max\left\{i : \alpha_{i} \leq 10^{-T} \right\} . $$

   We shall set $x=|Q| ^{\alpha_j}$ for $j \geq 1$ in \eqref{basicest} in what follow and we set
\[ {\mathcal M}_{i,j}(\chi) = \sum_{|Q| ^{\alpha_{i-1}} < |P| \leq |Q| ^{\alpha_{i}}}  \frac{\chi (P)}{|P|^{1/2+1/(\log |Q| ^{\alpha_{j}})}} \frac{\log (|Q| ^{\alpha_{j}}/|P|)}{\log |Q| ^{\alpha_{j}}}, \quad 1\leq i \leq j \leq \mathcal{J}. \]

 We also define for $1\leq j \leq \mathcal{J}$,
\begin{align*}
 \mathcal{S}(j) =& \left\{ \text{primitive  } \chi \shortmod Q : | \Re {\mathcal M}_{i,l}(\chi)| \leq \alpha_{i}^{-3/4} \; \; \mbox{for all}  \; 1 \leq i \leq j, \; \mbox{and} \; i \leq l \leq \mathcal{J}, \right. \\
 & \hspace*{6cm} \left. \;\;\;\;\; \text{but }  |\Re{\mathcal M}_{j+1,l}(\chi)| > \alpha_{j+1}^{-3/4} \; \text{ for some } j+1 \leq l \leq \mathcal{J} \right\} . \\
 \mathcal{S}(\mathcal{J}) =& \left\{ \text{primitive  } \chi \shortmod Q : |\Re{\mathcal M}_{i, \mathcal{J}}(\chi)| \leq \alpha_{i}^{-3/4} \; \mbox{for all}  \; 1 \leq i \leq \mathcal{J} \right\}.
\end{align*}

  We first note that,
\begin{align*}
%%\label{S0est}
\begin{split}
\text{meas}(\mathcal{S}(0)) \leq & \sum_{\substack{\chi \shortmod Q}} \sum^{\mathcal{J}}_{l=1}
\Big ( \alpha^{3/4}_{1}{|\Re \mathcal
M}_{1, l}(\chi)| \Big)^{2\lceil 1/(10\alpha_{1})\rceil } \leq \sum^{\mathcal{J}}_{l=1}\sum_{\substack{\chi \shortmod Q}} \Big ( \alpha^{3/4}_{1}{|\mathcal
M}_{1, l}(\chi)| \Big)^{2\lceil 1/(10\alpha_{1})\rceil }.
\end{split}
\end{align*}

  We apply Lemma \ref{lem:2.5} to bound the last expression above to see that 
\begin{align}
\label{Smest}
\begin{split}
 \text{meas}(\mathcal{S}(0)) \ll &  \mathcal{J} |Q| (\lceil 1/(10\alpha_{1})\rceil !) (\alpha^{3/4}_{1})^{2 \lceil 1/(10\alpha_{1})\rceil}\Big (\sum_{|P|  \leq |Q|^{\alpha_1}} \frac{1}{|P|}\Big )^{ \lceil 1/(10\alpha_{1})\rceil}  \\
\ll & \mathcal{J} |Q| \sqrt{\lceil 1/(10\alpha_{1})\rceil }\big(\frac {\lceil 1/(10\alpha_{1})\rceil }{e}\big)^{ \lceil 1/(10\alpha_{1})\rceil} (\alpha^{3/4}_{1})^{2 \lceil 1/(10\alpha_{1})\rceil}\Big (\sum_{|P|  \leq |Q|^{\alpha_1}} \frac{1}{|P|}\Big )^{ \lceil 1/(10\alpha_{1})\rceil}.
\end{split}
 \end{align}

   Now Lemma \ref{RS} gives
\begin{align*}
%%\label{sump1}
 \mathcal{J} \leq \log\log\log |Q|  , \quad \alpha_{1} = \frac{1}{(\log\log |Q|)^{2}}  \quad \mbox{and} \quad \sum_{|P|  \leq |Q|^{1/(\log\log |Q|)^{2}}} \frac{1}{|P|} \leq \log\log |Q|=\alpha^{-1/2}_1 .
\end{align*}

Applying these estimates to \eqref{Smest} yields
\begin{align*}
\text{meas}(\mathcal{S}(0)) \ll &
\mathcal{J}|Q| \sqrt{\lceil 1/(10\alpha_{1})\rceil } e^{-1/(10\alpha_{1})}\ll |Q| e^{-(\log\log |Q|)^{2}/20}  .
\end{align*}
  We then deduce via the Cauchy-Schwarz inequality and Proposition \ref{prop: upperbound}  that
\begin{align}
\label{LS0bound}
\begin{split}
\sum_{\chi \in  \mathcal{S}(0)} \left| L \left( \tfrac{1}{2}, \chi \right) \right|^{2k}  \leq &   \left( \text{meas}(\mathcal{S}(0)) \cdot
\sumstar_{\substack{\chi \shortmod Q}} |L(\half, \chi)|^{4k} \right)^{1/2}
 \\
 \ll & \left( |Q|  \exp\left( -(\log\log |Q| )^{2}/20 \right) |Q|  (\log_q |Q| )^{(2k)^{2}+1} \right)^{1/2} \ll  |Q| (\log_q |Q| )^{k^2}.
\end{split}
\end{align}

  Notice that $\{ \text{primitive  } \chi \pmod Q\}  = \bigcup_{j=0}^{ \mathcal{J}}\mathcal{S}(j)$, so that we deduce from this and \eqref{LS0bound} that it suffices to show that
\begin{align}
\label{sumovermj}
  \sum_{j=1}^{\mathcal{J}}\sum_{\chi \in \mathcal{S}(j)} |L( \tfrac{1}{2}, \chi)|^{2k}
   \ll |Q| (\log_q |Q|)^{k^{2}} .
\end{align}

    Now,  we fixing a $j$ with $1 \leq j \leq \mathcal{J}$ and set $x=|Q| ^{\alpha_j}$ in \eqref{basicest} to arrive at
\begin{align*}
\begin{split}
 & \left| L \left( \tfrac{1}{2}, \chi \right) \right|^{2k} \ll \exp \left(\frac {2k}{\alpha_j} \right) \exp \Big (
 2k\Re\sum^j_{i=1}{\mathcal M}_{i,j}(\chi) \Big ).
\end{split}
 \end{align*}

  As we have $|\Re{\mathcal M}_{i, j}| \leq  \alpha^{-3/4}_i$ when $\chi \in \mathcal{S}(j)$, we can directly apply \cite[Lemma 5.2]{Kirila} to obtain that
\begin{align*}
\begin{split}
\exp \Big ( 2k \Re\sum^j_{i=1}{\mathcal M}_{i,j}(\chi)\Big ) \ll
\prod^j_{i=1}E_{e^2k\alpha^{-3/4}_i}(k\Re{\mathcal M}_{i,j}(\chi))^2.
\end{split}
 \end{align*}

   We then deduce from the description on $\mathcal{S}(j)$ that when $j \geq 1$,
\begin{align*}
%%\label{L2k1}
\begin{split}
  \sum_{\chi \in \mathcal{S}(j)} & \left| L \left( \tfrac{1}{2}, \chi \right) \right|^{2k}  
 \ll  \exp \left( \frac {4k}{\alpha_j} \right)
 \sum^{\mathcal{J}}_{l=j+1} \sum_{\chi \in \mathcal{S}(j)}
\prod^j_{i=1}E_{e^2k\alpha^{-3/4}_i}(k\Re{\mathcal M}_{i,j}(\chi))^2\Big ( \alpha^{3/4}_{j+1}|{\mathcal
M}_{j+1, l}(\chi)|\Big)^{2\lceil 1/(10\alpha_{j+1})\rceil }.
\end{split}
 \end{align*}

  As the right-hand side of the expression above is non-negative, we further deduce that
\begin{align}
\label{L2k}
\begin{split}
  \sum_{\chi \in \mathcal{S}(j)} & \left| L \left( \tfrac{1}{2}, \chi \right) \right|^{2k}  \ll  \exp \left( \frac {4k}{\alpha_j} \right)
 \sum^{\mathcal{J}}_{l=j+1} \sum_{\substack{ \chi \shortmod Q }}
\prod^j_{i=1}E_{e^2k\alpha^{-3/4}_i}(k\Re{\mathcal M}_{i,j}(\chi))^2\Big ( \alpha^{3/4}_{j+1}|{\mathcal
M}_{j+1, l}(\chi)|\Big)^{2\lceil 1/(10\alpha_{j+1})\rceil }.  
\end{split}
 \end{align}

 Now, we define functions $c_i(f), 1 \leq i \leq \mathcal{J}$ to be the indicator function of the condition that $f$ is composed of at most $\lceil e^2k\alpha^{-3/4}_i \rceil$ primes, all from the interval $(|Q| ^{\alpha_{i-1}}, |Q| ^{\alpha_{i}}]$. Also, let $c_{j+1}(f)$ the indicator of the condition that if $f$ is composed of exactly $\lceil 1/(10\alpha_{j+1})\rceil$ primes (counted with multiplicity), all from the interval $(|Q| ^{\alpha_{i}}, |Q| ^{\alpha_{i+1}}]$. Furthermore, we define the totally multiplicative function $\beta_j, \gamma_{\chi}$ such that
\begin{align*}
%%\label{alphaj}
\beta_j(P)= \frac{1}{|P|^{1/\log |Q| ^{\alpha_{j}}}}  \frac{\log (|Q| ^{\alpha_{j}}/|P|)}{\log |Q| ^{\alpha_{j}}} \quad \mbox{and} \quad
 \gamma_{\chi}(P)= \frac {\chi(P)+\overline{\chi(P)}}{2}.
\end{align*}

The above notations, together with those used in Section \ref{sec 4}, allow us to write
\begin{align*}
%%\label{5.1}
\begin{split}
 E_{e^2k\alpha^{-3/4}_i}(k\Re {\mathcal M}_{i,j}(\chi))=& \sum_{f_i} \frac{\beta_j(f_i)}{\sqrt{|f_i|}} \frac{k^{\Omega(f_i)}}{w(f_i)}  c_i(f_i) \gamma_{\chi}(f_i), \quad 1 \le i \le j, \\
\Big ( {\mathcal M}_{j+1, l}(\chi)\Big)^{\lceil 1/(10\alpha_{j+1})\rceil } =&  \sum_{ \substack{f_{j+1}}}\frac{\beta_l(f_{j+1})}{\sqrt{f_{j+1}}}\frac{(\lceil 1/(10\alpha_{j+1})\rceil)!
  }{w(f_{j+1})}c_{j+1}(f_{j+1})\chi(f_{j+1}).
\end{split}
\end{align*}

 We apply the above to recast the sum over $\chi$ in \eqref{L2k} as
\begin{align}
\label{sumoverchi}
\begin{split}
& \Big ( \alpha^{3/4}_{j+1}\Big)^{2\lceil 1/(10\alpha_{j+1})\rceil }(\lceil 1/(10\alpha_{j+1})\rceil!)^2\sum_{\substack{f_i, f'_i\\ 1 \leq i \leq j+1}} \frac{\beta_l(f_{j+1}f'_{j+1})\prod^{j}_{i=1}\beta_j(f_if'_i))}{\sqrt{\prod^{j+1}_{i=1}|f_if_i'|}}
\frac{k^{\Omega(\prod^{j+1}_{i=1}f_if_i')}}{w(\prod^{j+1}_{i=1}f_if_i')}\prod^{j+1}_{i=1}c_i(f_if_i') \\
& \times \sum_{\substack{ \chi \shortmod Q }}\chi(f_{j+1})\overline{\chi(f'_{j+1})}\prod^j_{i=1}\gamma_{\chi}(f_if_i').
\end{split}
\end{align}

  We may write the sum over $\chi$ above in the form
\begin{align*}
\begin{split}
  \sum_{f,g}c_{f,g}\sum_{\substack{ \chi \shortmod Q }}\chi(f)\overline{\chi(g)},
\end{split}
\end{align*}
  where $c_{f,g}$ depends on $f,g$ only. Then it is easy to see that
\begin{align*}
\begin{split}
 |f|, |g| \ll 
\Big (\prod^j_{i=1}|Q|^{\alpha_i \cdot e^2k\alpha^{-3/4}_i} \Big ) \cdot |Q|^{\alpha_{i+1} \cdot \lceil 1/(10\alpha_{j+1})\rceil } \ll |Q|^{1-\varepsilon}.
\end{split}
\end{align*}

  It follows from this and the orthogonal relation given in \eqref{orthrel} that only diagonal terms contribute to \eqref{sumoverchi}. More specifically, a typical sum of the form
\begin{align*}
\begin{split}
  \sum_{\substack{ \chi \shortmod Q }}\chi(f_{j+1})\overline{\chi(f'_{j+1})}\prod_{|P|^{l_P}\| f}\Big (\frac {\chi(P)+\overline{\chi(P)}}{2}\Big )^{l_P}.
\end{split}
\end{align*}
  is non-zero if and only if $f_{j+1}=f'_{j+1}$ and each $l_P$ is even, in which case the sum equals to
\begin{align*}
\begin{split}
  \varphi(Q)\prod_{|P|^{l_P}\| f} 2^{-l_P}\binom {l_P}{l_P/2}.
\end{split}
\end{align*}

 We then deduce from the above that
\begin{align}
\label{sumchiest}
\begin{split}
&\sum_{\substack{ \chi \shortmod Q }}
\prod^j_{i=1}E_{e^2k\alpha^{-3/4}_i}(k\Re{\mathcal M}_{i,j}(\chi))^2\Big ( \alpha^{3/4}_{j+1}|{\mathcal
M}_{j+1, l}(\chi)|\Big)^{2\lceil 1/(10\alpha_{j+1})\rceil } \\
\ll & |Q|\Big ( \alpha^{3/4}_{j+1}\Big)^{2\lceil 1/(10\alpha_{j+1})\rceil }\frac {(\lceil 1/(10\alpha_{j+1})\rceil!)^2}{(\lceil 1/(10\alpha_{j+1})\rceil)!}\prod_{|P| \leq |Q| ^{\alpha_j}}I_0\left( \frac {2k \beta_j(P)}{|P|^{1/2}}\right)  \Big ( \sum_{|Q| ^{\alpha_{j}} < |P| \leq |Q| ^{\alpha_{j+1}}} \frac{\beta^2_l(P)}{|P|}\Big )^{\lceil 1/(10\alpha_{j+1})\rceil }.
\end{split}
\end{align}
  where (see \cite[p. 492]{Kirila})
\begin{align*}
\begin{split}
 I_0(z)=\sum^{\infty}_{n=0}\frac {(z/2)^{2n}}{(n!)^2}
\end{split}
 \end{align*}
  is the modified Bessel function of the first kind. \newline

  Note that we have for $1 \leq i \leq \mathcal{J}-1$,
\begin{align*}
%%\label{sumpj}
\mathcal{J}-i \leq \frac{\log(1/\alpha_{i})}{\log 20}  \quad \mbox{and} \quad \sum_{|Q| ^{\alpha_{i}} < |P| \leq |Q| ^{\alpha_{i+1}}} \frac{1}{|P|}
 = \log \alpha_{i+1} - \log \alpha_{i} + o(1) = \log 20 + o(1) \leq 10 .
\end{align*}

  We apply Lemma \ref{RS}, \eqref{Stirling} and the above to estimate the last expression in \eqref{sumchiest} to see that it is
\begin{align*}
\begin{split}
\ll & |Q| e^{-200k/\alpha_{j+1}}  \prod_{|P| \leq |Q| ^{\alpha_j}}\left( 1+\frac {k^2}{|P|}+O \left( \frac 1{|P|^2} \right) \right) \ll e^{-200k/\alpha_{j+1}}  |Q| (\log_q |Q|)^{k^2}.
\end{split}
 \end{align*}

   We then conclude from the above and \eqref{L2k}, noting that $20/\alpha_{j+1}=1/\alpha_j$, that
\begin{align*}
\begin{split}
  \sum_{\chi \in \mathcal{S}(j)} |L(1/2, \chi)|^{2k}
\ll&  (\mathcal{J}-j) e^{4k/\alpha_j} e^{-200k/\alpha_{j+1}}  |Q| (\log_q |Q|)^{k^2}\ll   e^{-2k/\alpha_{j}}|Q|  (\log_q |Q| )^{k^{2}}.
\end{split}
 \end{align*}

   As the sum of the right-hand side expression over $j$ converges, we see that the above bound implies \eqref{sumovermj}
and this completes the proof of Proposition \ref{thmupperbound}.

\vspace*{.5cm}

\noindent{\bf Acknowledgments.}  P. G. is supported in part by NSFC grant 11871082 and L. Z. by the Faculty Silverstar Grant PS65447 at the University of New South Wales. The authors are very grateful to the anonymous referee for his/her very careful reading of the manuscript and many helpful suggestions. 

\bibliography{biblio}
\bibliographystyle{amsxport}

\end{document}